\newtheorem{theorem}{Theorem}[section]
\newtheorem{lemma}[theorem]{Lemma}
\newtheorem{proposition}[theorem]{Proposition}
\theoremstyle{definition}
\newtheorem{definition}[theorem]{Definition}
\newtheorem{remark}[theorem]{Remark}
\newcommand{\R}{{\ensuremath{\mathbb{R}}}}
\newcommand{\Z}{{\ensuremath{\mathbb{Z}}}}
\newcommand{\CS}{{\rm CS}}
\newcommand{\mfs}{\mathfrak{s}}
\newcommand{\spinc}{\text{Spin}^c}
\DeclareMathOperator{\tr}{tr}
\DeclareMathOperator{\ind}{ind}
\newcommand{\PHS}{P}
\theoremstyle{introtheorem}
\newtheorem{introtheorem}{Theorem}
\theoremstyle{introcor}
\theoremstyle{introrem}
\author[Aliakbar Daemi]{Aliakbar Daemi}
\address{Department of Mathematics and Statistics, Washington University in St. Louis}
\email{adaemi@wustl.edu}
\author[Tye Lidman]{Tye Lidman}
\address{Department of Mathematics, North Carolina State University}
\email{tlid@math.ncsu.edu}
\author[Mike Miller Eismeier]{Mike Miller Eismeier}
\address{Department of Mathematics, Columbia University}
\email{smm2344@columbia.edu}
\title{3-manifolds without any embedding in symplectic 4-manifolds}
\begin{document}
\maketitle

\begin{abstract}
We show that there exist infinitely many closed 3-manifolds that do not embed in closed symplectic 4-manifolds, disproving a conjecture of Etnyre-Min-Mukherjee.  To do this, we construct L-spaces that cannot bound positive or negative definite manifolds.  The arguments use Heegaard Floer correction terms and instanton moduli spaces.
\end{abstract}

\begin{introtheorem}\label{thm:symplectic}
There exist infinitely many rational homology spheres which cannot embed in a closed symplectic 4-manifold.  
\end{introtheorem}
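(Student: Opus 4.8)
The plan is to translate the non-embedding statement into a statement about definite fillings, and then to exhibit an infinite family of L-spaces admitting no such fillings by combining a Heegaard Floer obstruction with an instanton obstruction.

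First I would prove the reduction: if a rational homology sphere $Y$ embeds in a closed symplectic $4$-manifold $X$, then $Y$ bounds a positive-definite or a negative-definite $4$-manifold (equivalently --- since $Y$ bounds a negative-definite manifold exactly when $-Y$ bounds a positive-definite one --- it suffices to show that $Y$ or $-Y$ bounds a negative-definite one). Assume first that $Y$ separates, $X = X_1 \cup_Y X_2$; because $Y$ is a rational homology sphere, Mayer--Vietoris gives $b_2^+(X) = b_2^+(X_1) + b_2^+(X_2)$ and the intersection form of $X$ is block diagonal. If $b_2^+(X) = 1$ then one of $X_1, X_2$ already has $b_2^+ = 0$. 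If $b_2^+(X) \ge 2$ then, by Taubes, the Seiberg--Witten invariant of $X$ in the canonical $\spinc$ structure is $\pm 1$, hence nonzero; but the Seiberg--Witten invariants of a closed $4$-manifold that splits along a rational homology sphere into two pieces each with $b_2^+ \ge 1$ vanish, so again one of the pieces has $b_2^+ = 0$. That piece is a negative-(semi)definite filling of $Y$ or of $-Y$, as desired. The non-separating case requires a short separate argument reducing it to the above.

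The core of the paper is then the construction of infinitely many L-spaces $Y_n$ such that neither $Y_n$ nor $-Y_n$ bounds a negative-definite $4$-manifold. For one fixed orientation the natural tool is the Ozsv\'ath--Szab\'o inequality: a negative-definite $W$ with $\partial W = Y$ satisfies $c_1(\mathfrak s)^2 + b_2(W) \le 4\,d(Y, \mathfrak s|_Y)$ for every $\spinc$ structure $\mathfrak s$ on $W$, so if the correction terms of $Y$ are sufficiently negative this fails for all $W$ and $Y$ admits no negative-definite filling. The crucial subtlety is that $d$-invariants are anti-symmetric under orientation reversal, so this mechanism can obstruct negative-definite fillings of $Y$ \emph{or} of $-Y$ but never both. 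To obstruct fillings of the remaining orientation I would use instanton-theoretic correction terms --- Fr\o yshov's invariant $h$ and its refinements (the $\Gamma$-invariants), which are controlled by reducible instantons over the hypothetical filling and obey constraints that are not merely anti-symmetric --- and arrange the relevant instanton invariant so that negative-definite fillings of $-Y_n$ are excluded as well. Finally one checks that both families of invariants can be computed, or suitably bounded, along a concrete infinite family (for instance, surgeries on a well-chosen sequence of knots), with the bounds strengthening without limit so that every $Y_n$ works and the $Y_n$ are pairwise non-homeomorphic.

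The main obstacle is this last construction: one must produce a single infinite family on which \emph{both} obstructions can be verified, and the Heegaard Floer and instanton invariants are computed by entirely different machinery, so examples that are tractable for one theory tend to be intractable for the other. Granting it, Theorem~\ref{thm:symplectic} follows immediately: each $Y_n$ bounds neither a positive- nor a negative-definite $4$-manifold, hence by the reduction above cannot embed in any closed symplectic $4$-manifold.
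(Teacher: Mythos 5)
Your high-level architecture matches the paper's: reduce embeddability to the existence of a definite filling, then kill negative-definite fillings with Heegaard Floer correction terms and positive-definite fillings with instanton theory. But there are two genuine gaps. First, your reduction step asserts that the Seiberg--Witten invariants of a closed $4$-manifold vanish whenever it splits along a rational homology sphere into two pieces with $b_2^+\ge 1$. That is false for a general rational homology sphere: the gluing formula expresses the invariant as a pairing of relative invariants through the reduced monopole Floer homology of the separating $3$-manifold, and this vanishes only when that reduced Floer homology is zero, i.e.\ when the $3$-manifold is an L-space. This is exactly why the paper works with L-spaces and invokes Mukherjee's theorem (an L-space embedded in a closed symplectic $4$-manifold bounds a definite piece); your examples are L-spaces, so the statement is patchable, but as written the reduction is wrong.

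Second, and more seriously, the heart of the theorem --- producing an actual infinite family on which both obstructions can be verified --- is exactly the step you defer, and your proposed tool for the positive-definite side does not exist in the required generality. Refinements of Fr\o yshov's $h$-invariant ($\Gamma$-type invariants) are currently available for integer homology spheres; the paper explicitly remarks that their extension to rational homology spheres is only expected. Instead, the paper takes $Y = mP \,\#\, {-}kO$ with $k > 8m$, where $P$ is the Poincar\'e sphere and $O$ is the octahedral manifold $S^3_{-2}(\text{left trefoil})$: the $d$-invariants of $-O$ are both negative, giving $\max_\mathfrak{s} d = 2m - k/4 < 0$ and ruling out negative-definite fillings via Owens--Strle; and for positive-definite fillings it runs a direct end-counting argument on a $1$-dimensional moduli space of perturbed instantons of energy $\tfrac{1}{120}$ on a surgered-down cobordism, where the key numerical input is that all Chern--Simons values on $O$ lie in $\tfrac{1}{48}\Z$ (since $|\pi_1(O)| = 48 < 120 = |\pi_1(P)|$), so no instanton on $\R\times(\pm O)$ can contribute an end, and the only ends come from gluing the minimal-energy instanton on $\R \times P$ to the finitely many reducibles on the cobordism --- a nonzero signed count, contradicting that an oriented $1$-manifold has zero ends. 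Without a concrete family and a working instanton obstruction for rational homology spheres, your outline does not yet constitute a proof.
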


The family of manifolds we use are particular connected sums of elliptic manifolds.  Let $\PHS$ denote the Poincar\'e homology sphere oriented as the boundary of the negative definite $E_8$ plumbing, equivalently $-1$-surgery on the left-handed trefoil.  Let $O$ denote the ``first'' octahedral manifold with Seifert invariants $(-2; 1/2, 2/3, 3/4)$ oriented as the boundary of the negative definite $E_7$ plumbing, equivalently $-2$-surgery on the left-handed trefoil (see for example \cite[Theorem 2, Equation 2]{Doig}). The manifolds in the theorem are those of the form $m\PHS \# {-} k O$ with $m\geq 1$ and $k > 8m$.  This answers the conjecture of Etnyre-Min-Mukherjee from \cite[p.6]{EtnyreMinMukherjee} in the negative.  (Note that this is stronger than saying that the manifolds are not symplectically fillable, since a separating 3-manifold may sit in a symplectic 4-manifold in a way which is not compatible with any contact structure on the 3-manifold.)  

The rational homology spheres above are L-spaces, since they are connected sums of elliptic manifolds \cite[Section 2]{HFKLens}.  It is shown in \cite{Mukherjee} that if an L-space embeds in a symplectic 4-manifold, then it must bound a definite 4-manifold.  Hence, we are able to prove Theorem~\ref{thm:symplectic} by proving:

\begin{introtheorem}\label{thm:main}
For any pair of integers $k$ and $m$ with $m\geq 1$ and $k > 8m$, the manifolds $m\PHS \#{-}k O$ are L-spaces which cannot bound positive- or negative-definite 4-manifolds.
\end{introtheorem}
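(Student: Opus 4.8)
The plan is to pull the L-space property from the literature and then obstruct the two definiteness types by separate means. Write $Y = m\PHS\#{-}kO$. As a connected sum of elliptic Seifert fibered spaces, $Y$ is an L-space by \cite[Section~2]{HFKLens}, and that reference also identifies $\spinc(Y)$ with the product of the $\spinc$ sets of the $\PHS$- and $O$-summands. The first substantive step is a computation of the Heegaard Floer correction terms. From the sharp negative-definite plumbings one has $d(\PHS)=2$ (the $E_8$-plumbing is even of rank $8$) and $d(O,\cdot)\in\{7/4,\,1/4\}$ over the two $\spinc$ structures of $O$ (the $E_7$-plumbing is even of rank $7$ and sharp; equivalently use the surgery description of $O$ given above). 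By additivity of $d$ under connected sum, every correction term of $Y$ has the form $2m-\tfrac14 k-\tfrac32 a$ for some $0\le a\le k$, so $\max_{\mathfrak s}d(Y,\mathfrak s)=2m-k/4$. The hypothesis $k>8m$ is exactly the statement that this maximum, hence every correction term of $Y$, is negative; reversing orientation, every correction term of $-Y$ is positive.

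Negative-definite fillings are then ruled out by the Ozsv\'ath--Szab\'o correction-term inequality: a negative-definite $X$ with $\partial X=Y$ would satisfy $c_1(\mathfrak t)^2+b_2(X)\le 4\,d(Y,\mathfrak t|_Y)$ for every $\spinc$ structure $\mathfrak t$ on $X$, yet the right-hand side is always negative, whereas some characteristic covector of the (negative-definite) intersection form satisfies $c_1(\mathfrak t)^2\ge -b_2(X)$. The one point requiring care is that $H_1(Y)\cong(\Z/2)^k$ forces the intersection form of any filling to be non-unimodular, so one must invoke a refinement of Elkies' inequality valid for non-unimodular definite forms (in the spirit of Owens--Strle) rather than its classical version. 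This settles that $Y$ bounds no negative-definite $4$-manifold.

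Positive-definite fillings are the harder case, and here correction terms are of no help: such a filling has boundary $-Y$, all of whose correction terms are positive, so the Ozsv\'ath--Szab\'o inequality is vacuously satisfied. This is where instanton moduli spaces enter. Writing $-Y = m(-\PHS)\#kO$, the summands $-\PHS$---the Poincar\'e sphere bounding the \emph{positive}-definite $E_8$-plumbing---carry an instanton-theoretic obstruction to bounding a negative-definite $4$-manifold (the instanton analogue of the Fr\o yshov invariant), and this obstruction behaves well under connected sum; each copy of $O$ contributes in the opposite direction, since $O$ bounds the negative-definite $E_7$-plumbing. The task is to show that the contribution of the $m$ Poincar\'e summands, in which the number $8=b_2(E_8)$ enters, is not cancelled by the $k$ copies of $O$, and this is again exactly where $k>8m$ is used. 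Concretely one caps $-Y$ off along copies of the $E_8$- and $E_7$-plumbings to form a closed $4$-manifold and runs a Donaldson-style analysis of its anti-self-dual moduli spaces---dimension counts, reducible connections, and Uhlenbeck compactness---now in the presence of $2$-torsion in homology.

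The main obstacle is this last step. The instanton argument must be made to interact correctly with the connected-sum decomposition, and, more seriously, the $2$-torsion in $H_1(Y)$---which already complicated the Heegaard Floer step---forces the $w_2$/$\spinc$ data on the moduli problem to be chosen with care and is the chief source of technical difficulty.
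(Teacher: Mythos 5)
Your treatment of the L-space property and of the negative-definite case tracks the paper: the correction terms of $-O$ are $-7/4$ and $-1/4$, additivity gives $\max_{\mathfrak s} d(m\PHS\#{-}kO,\mathfrak s)=2m-k/4<0$ for $k>8m$, and the Owens--Strle refinement of the correction-term inequality for non-unimodular negative-definite fillings finishes that half. (A small slip: $H_1(O)\cong(\Z/2)^2$, so $H_1(Y)\cong(\Z/2)^{2k}$, not $(\Z/2)^k$; this does not affect the argument.)

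The positive-definite half contains a genuine gap, and it is conceptual. You propose an additive Fr\o yshov-type count in which the $m$ copies of ${-}\PHS$ push one way, the $k$ copies of $O$ push the other, and $k>8m$ ensures the former win. This cannot work: as you yourself observe, every correction term of $-Y$ is positive once $k>8m$, so any numerical $h$- or $d$-type invariant is \emph{consistent} with $-Y$ bounding a negative-definite $4$-manifold, and increasing $k$ only makes such an obstruction weaker --- the hypothesis $k>8m$ works against you here, not for you. Indeed, the positive-definite obstruction holds for \emph{all} $k$ and all $m>0$; the inequality $k>8m$ is needed only in the negative-definite half. The mechanism that succeeds is a Chern--Simons energy filtration rather than a numerical invariant: convert the hypothetical filling into a negative-definite cobordism $W\colon \PHS\to \sqcup_{m-1}{-}\PHS\sqcup_{|k|}{\pm}O$ with $b_1=b^+=0$, and study the oriented $1$-dimensional moduli space of perturbed instantons on $W$ of topological energy exactly $\tfrac{1}{120}$, asymptotic to the minimal irreducible flat connection on the incoming $\PHS$ and trivial on the outgoing ends. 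The copies of $\pm O$ contribute no ends because every nonconstant instanton on $\R\times{\pm}O$ has energy at least $\tfrac{1}{48}>\tfrac{1}{120}$ (all Chern--Simons values of $O$ lie in $\tfrac{1}{48}\Z/\Z$ since $S^3\to O$ is a $48$-fold cover, and $48<120=|\pi_1(\PHS)|$); the only ends come from gluing the unique energy-$\tfrac{1}{120}$ instanton on $\R\times\PHS$ to the asymptotically trivial reducibles on $W$, of which there are $|H_1(W)/i_*H_1(\partial W)|\neq 0$, all oriented the same way --- contradicting that an oriented $1$-manifold has zero ends counted with sign. Separately, your suggestion to cap off $-Y$ with the $E_8$- and $E_7$-plumbings to obtain a closed manifold would destroy definiteness (the cappings have the wrong sign relative to the orientation-reversed filling), so the closed-manifold Donaldson analysis you sketch is not available; one must work with cylindrical ends on the cobordism itself.
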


The argument has two steps.  In Section~\ref{sec:d-invariants} we use the Heegaard Floer correction terms to obstruct the negative-definite manifolds.  In Section~\ref{sec:chern-simons} we use Chern--Simons invariants and ASD moduli spaces to obstruct the positive-definite manifolds.  This section is where the real importance of our choice of the manifolds $O$ and $P$ appears.  In particular, because these manifolds have finite fundamental group, the values of the Chern-Simons functional are bounded in terms of the order of $\pi_1$.  This then greatly affects the structure of the moduli space of ASD connections on a definite manifold with boundary a sum of $P$'s and $O$'s.  The two essential properties of $O$ used in the proof are that $|\pi_1(O)| < |\pi_1(P)|$ and all of the $d$-invariants of $O$ are strictly negative. We also use that $H_1(O)$ is 2-torsion to simplify the discussion, but this is not an essential point.

Examples of 3-manifolds that do not bound any definite 4-manifold were previously given in \cite{NST:def-IHS,GL:def-RHS}. In \cite{NST:def-IHS}, a filtration of instanton Floer homology given by the Chern--Simons functional is used to construct integer homology spheres without any positive- or negative-definite 4-manifold filling. In \cite{GL:def-RHS}, the Heegaard Floer correction terms are used to construct examples of rational homology spheres that bound no definite 4-manifold. 

\section*{Acknowledgements} TL and MME thank the Department of Mathematics and Statistics at Wash U for their hospitality during their visits. The authors appreciate the helpful comments of anonymous referees on previous drafts of this article. AD was partially supported by NSF Grants DMS-1812033, DMS-2208181 and NSF FRG Grant DMS-1952762. TL was partially supported by NSF grant DMS-2105469 and a Sloan fellowship. MME was partially supported by NSF FRG Grant DMS-1952762.\\

\section{The $d$-invariant argument}\label{sec:d-invariants}
In this section, we use Heegaard Floer $d$-invariants \cite{OzSz:d-inv} to obstruct the manifolds $m \PHS \#{-}k O$ from bounding negative definite manifolds for suitable positive values of $k, m$.

\begin{proposition}\label{prop:d-neg}
Let $k, m > 0$.  The manifold $m\PHS \# {-} k O$ cannot bound a negative-definite 4-manifold for $k > 8m$.  
\end{proposition}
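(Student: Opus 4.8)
The plan is to argue by contradiction using the Ozsv\'ath--Szab\'o inequality for negative-definite fillings \cite{OzSz:d-inv}. Suppose $Y:=m\PHS\#{-}kO$ bounds a negative-definite $4$-manifold; after a standard surgery argument on interior loops we may assume $Y=\partial X$ with $X$ negative-definite and $H_1(X)=0$ (hence $H^{2}(X;\Z)$ is torsion-free). Then $\spinc(X)\to\spinc(Y)$ is surjective and $c_1$ is injective with image the set of characteristic covectors of the intersection lattice $Q_X$. Writing $n=b_2(X)$, the inequality reads
\[
c_1(\mathfrak{t})^{2}+n\ \le\ 4\,d\bigl(Y,\mathfrak{t}|_Y\bigr)\qquad\text{for every }\ \mathfrak{t}\in\spinc(X).
\]

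The second step is to pin down the correction terms of $Y$. By $d(Y_1\#Y_2,\mathfrak{s}_1\#\mathfrak{s}_2)=d(Y_1,\mathfrak{s}_1)+d(Y_2,\mathfrak{s}_2)$ and $d(-Y,\mathfrak{s})=-d(Y,\mathfrak{s})$, a $\spinc$ structure on $Y$ is a tuple $(\mathfrak{s}_i)_{i=1}^{k}$ with $\mathfrak{s}_i\in\spinc({-}O)$, and
\[
d\bigl(Y,(\mathfrak{s}_i)_{i}\bigr)\ =\ m\,d(\PHS)\ +\ \sum_{i=1}^{k}d({-}O,\mathfrak{s}_i).
\]
Starting from $\PHS=S^{3}_{-1}(T)$ and $O=S^{3}_{-2}(T)$ for the left-handed trefoil $T$, the surgery formula for correction terms gives $d(\PHS)=2$ and shows that the two correction terms of $-O$ are $-\tfrac{7}{4}$ and $-\tfrac{1}{4}$ (alternatively, these come from sharpness of the inequality above for the even negative-definite $E_8$- and $E_7$-plumbings, after reversing orientation for $-O$). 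Hence each summand $d({-}O,\mathfrak{s}_i)$ lies in $\{-\tfrac{7}{4},-\tfrac{1}{4}\}$, and
\[
\max_{\mathfrak{s}\in\spinc(Y)}d(Y,\mathfrak{s})\ =\ 2m+k\cdot\Bigl(-\tfrac{1}{4}\Bigr)\ =\ 2m-\tfrac{k}{4}.
\]

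To conclude, choose $\mathfrak{t}\in\spinc(X)$ whose Chern class $c:=c_1(\mathfrak{t})$ is a characteristic covector of $Q_X$ of largest square. A short-characteristic-covector bound for negative-definite integral forms (Elkies, in the unimodular case; for the non-unimodular forms arising here it is elementary, since $H_1(Y)=(\Z/2)^{k}$ makes the discriminant group of $Q_X$ elementary $2$-abelian) gives $c^{2}\ge -n$. Feeding this into the two displays,
\[
0\ =\ -n+n\ \le\ c^{2}+n\ \le\ 4\,d\bigl(Y,\mathfrak{t}|_Y\bigr)\ \le\ 4\max_{\mathfrak{s}}d(Y,\mathfrak{s})\ =\ 8m-k,
\]
so $k\le 8m$, contradicting $k>8m$. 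This proves the proposition.

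I expect the only step needing genuine care to be the last lattice-theoretic input --- that every negative-definite form of rank $n$ carries a characteristic covector of square at least $-n$ --- and it is exactly here that the hypothesis that $H_1(O)$ is $2$-torsion is used. Everything else is a direct unwinding of the correction-term formulas together with the computations of $d(\PHS)$ and $d(O,\cdot)$, and the reduction to $H_1(X)=0$ is routine interior surgery that changes neither negative-definiteness nor the boundary.
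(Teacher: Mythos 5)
Your overall architecture is the same as the paper's: compute $\max_{\mathfrak{s}} d(m\PHS\#{-}kO,\mathfrak{s}) = 2m - k/4$ via additivity, $d(\PHS)=2$, and the surgery formula for $S^3_2(T_{2,3})$ (this part matches the paper's Lemma~\ref{lem:d-comp} exactly), and then contradict a lower bound on $\max_{\mathfrak{s}} d$ forced by a negative-definite filling. The difference is that the paper obtains the lower bound $\max_{\mathfrak{s}} d \ge 0$ by citing \cite[Proposition 5.2]{OwensStrle} as a black box, whereas you attempt to re-derive it from the Ozsv\'ath--Szab\'o inequality $c_1(\mathfrak{t})^2 + n \le 4d(Y,\mathfrak{t}|_Y)$ together with a short-characteristic-covector bound. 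That is where your proof has a genuine gap: the assertion that every negative-definite integral form of rank $n$ whose discriminant group is elementary $2$-abelian admits a characteristic covector with $c^2 \ge -n$, and that this is ``elementary,'' is unsubstantiated. Even in the unimodular case this is Elkies' theorem, whose proof goes through theta series and modular forms; the extension to non-unimodular definite forms is precisely the nontrivial lattice-theoretic content of Owens--Strle's work, and it is not a routine consequence of the discriminant group being $2$-elementary. As written, the crucial inequality $c^2 \ge -n$ is simply asserted, so the contradiction $0 \le 8m-k$ is not established. (If you instead cite \cite[Proposition 5.2]{OwensStrle} for $\max_{\mathfrak{s}} 4d \ge 0$, your argument collapses onto the paper's.)

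Two secondary issues. First, the reduction ``we may assume $H_1(X)=0$ by interior surgery'' is not routine: surgering a loop that is torsion or trivial in $H_1(X;\Q)$ changes the intersection form (in the worst case it connect-sums with $S^2\times S^2$ or its twisted version), destroying definiteness. The standard move is only to kill $b_1(X)$ by surgering a rational basis of $H_1$, and then to work with $H^2(X;\Z)$ modulo torsion; the correspondence between $\spinc(X)$ and characteristic covectors of $Q_X$, and the identification of the discriminant group with a subgroup of $H^2(Y)$, still go through but need to be stated at that level. Second, $H_1(m\PHS\#{-}kO)\cong(\Z/2)^{2k}$, not $(\Z/2)^k$, since $H_1(O)\cong(\Z/2)^2$; this does not affect the argument but should be corrected wherever the discriminant group is invoked.
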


Before proving this, we need to compute the Heegaard Floer $d$-invariants of $O$.  
\begin{lemma}\label{lem:d-comp}
For a choice of labelling, the two $\spinc$ structures on $O$, $\mfs_0, \mfs_1$, satisfy 
\[
d(-O,\mfs_0) = -7/4, \quad d(-O,\mfs_1) = -1/4.  
\]
\end{lemma}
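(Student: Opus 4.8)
The plan is to compute $d(-O, \mfs_i)$ using the description of $O$ as $-2$-surgery on the left-handed trefoil, equivalently $+2$-surgery on the right-handed trefoil with reversed orientation. First I would set up the surgery formula: if $T$ denotes the right-handed trefoil, then $-O$ is obtained as $+2$-surgery on $T$ (or I could equivalently work with the plumbing description and use the algorithm of Ozsv\'ath--Szab\'o for the $d$-invariants of boundaries of negative-definite plumbings with at most one bad vertex, since the $E_7$ plumbing qualifies). I will use the rational surgery formula for $d$-invariants, which for $p/q$-surgery on a knot $K$ of genus $1$ expresses $d(S^3_{p/q}(K), i)$ in terms of $d(S^3_{p/q}(U), i)$ (the lens space value, computable by the recursive Ozsv\'ath--Szab\'o formula) and a correction term built from the torsion coefficients $t_j(K)$ of $K$. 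For the right-handed trefoil, the only nonzero torsion coefficient is $t_0(T) = 1$ (equivalently $V_0 = 1$, $V_j = 0$ for $j \geq 1$ in the $\{V_j\}$-formalism).

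The concrete steps are: (1) record the two self-conjugate-friendly labelings of $\spinc$ structures on the lens space $L(2,1)$ and compute $d(L(2,1), \mfs_0) = 1/4$, $d(L(2,1), \mfs_1) = -1/4$ via the recursion $d(L(p,q), i) = \left(\frac{pq - (2i+1-p-q)^2}{4pq}\right) - d(L(q, p \bmod q), i \bmod q)$; (2) apply the large-surgery / rational-surgery correction: for $+2$-surgery on $T$ the formula reads $d(S^3_2(T), i) = d(L(2,1), i) - 2\max\{V_{\lfloor i/2\rfloor}, H_{\ldots}\}$-type expression, which with $V_0 = 1, V_1 = 0$ shifts exactly one of the two $\spinc$ structures down by $2$ and leaves the other unchanged; (3) match labels and orientations carefully so that the $\spinc$ structure picking up the $-2$ shift is recorded as $\mfs_0$, yielding $d(-O,\mfs_0) = 1/4 - 2 = -7/4$ and $d(-O,\mfs_1) = -1/4$. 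As a consistency check I would verify that $-O$ bounds the negative-definite $E_7$ plumbing, so both $d$-invariants of $-O$ must be $\le 0$ and satisfy the standard bound $d(-O,\mfs) + \ldots$ against the intersection form — both $-7/4$ and $-1/4$ pass, and indeed this is the source of the ``all $d$-invariants of $O$ strictly negative'' property flagged as essential in the introduction (note $d(O,\mfs_i) = -d(-O,\mfs_i)$, so $d(O,\cdot) \in \{7/4, 1/4\}$ — one should be careful which manifold the lemma is really asserting the negativity for, namely $O$ itself has positive $d$'s and $-O$ has negative ones; I'd double-check the sign conventions against the cited statement).

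The main obstacle I anticipate is purely bookkeeping: getting the orientation conventions and the $\spinc$-labeling conventions consistent across three different descriptions ($-2$-surgery on the left trefoil, $+2$-surgery on the right trefoil, and the $E_7$ plumbing), since a sign error or a label swap changes $-7/4 \leftrightarrow -1/4$ assignments or flips the overall sign. The cleanest route is probably to avoid surgery formulas altogether and run the Ozsv\'ath--Szab\'o plumbing algorithm directly on the weighted $E_7$ tree: the graph has one bad vertex, so their theorem computes $HF^+(-O)$ and the $d$-invariants combinatorially from characteristic covectors, and the output can be read off as the two rational numbers above. I would present that computation as the proof, with the surgery-formula version relegated to a remark as a cross-check.
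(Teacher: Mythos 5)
Your proposal is correct and follows essentially the same route as the paper: the paper also computes $d(-O,\mfs_i)$ via the description of $-O$ as $S^3_2(T_{2,3})$, citing Owens--Strle's Theorem 6.1, which is precisely the formula $d(S^3_2(K),\mfs_i) = d(L(2,1),\mfs_i) - 2t_i(K)$ that you reconstruct from the lens-space values $\pm\tfrac14$ and the torsion coefficients $t_0(T_{2,3})=1$, $t_1(T_{2,3})=0$. Your labeling worry is moot since the lemma is stated only ``for a choice of labelling,'' and the alternative $E_7$-plumbing computation you prefer is not what the paper does, though it would serve equally well.
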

\begin{proof}
There are several ways to compute the $d$-invariants of $O$.  We opt for the surgery description of $-O$ as $S^3_2(T_{2,3})$.  By \cite[Theorem 6.1]{OwensStrle} (and the formulas following), there is a labeling of the $\spinc$ structures such that 
\[
d(S^3_2(T_{2,3}), \mfs_0) = \frac{1}{4} - 2t_0(T_{2,3}), \quad d(S^3_2(T_{2,3}), \mfs_1) = -\frac{1}{4} - 2t_1(T_{2,3}),
\]
where $t_i(K)$ denotes the $i$th torsion coefficient, $\sum_{j \geq 1} j a_{|i|+j}$, and $a_{k}$ 
is the $k$th coefficient of the symmetrized Alexander polynomial.  The result now follows, since $t_0(T_{2,3}) = 1$ and $t_1(T_{2,3}) = 0$ (see \cite[Equations 2 and 3]{OwensStrle}).  
\end{proof}

\begin{proof}[Proof of Proposition \ref{prop:d-neg}]
Suppose that $m\PHS \# {-} k O$ bounds a negative-definite $4$-manifold. Then \cite[Proposition 5.2]{OwensStrle} implies that 
\[
\max_{\mfs} d(m\PHS \# {-} kO, \mfs) \geq \begin{cases} 0 & k \text{ even}, \\ \frac{1}{4} & k \text{ odd}. \end{cases}
\]
In particular, 
\begin{equation}\label{eq:d-inequality}
\max_{\mfs} d(m\PHS \# {-} kO, \mfs) \geq 0. 
\end{equation} 
Recall that $d$-invariants are additive under connected sum and that $d(\PHS) = 2$.  We thus have from Lemma~\ref{lem:d-comp} 
\[
\max_{\mfs} d(m\PHS \# {-} kO, \mfs) = 2m - \frac{k}{4}. 
\]
It follows that for $k > 8m$, \eqref{eq:d-inequality} is violated.
\end{proof}

\begin{remark}
It seems likely that Proposition~\ref{prop:d-neg} can also be proved using Donaldson's diagonalizability theorem and lattice techniques. We anticipate that the assumption $k > 8m$ can be relaxed somewhat using refinements of Fr\o yshov's instanton $h$-invariant for rational homology spheres.
\end{remark}

\section{The Chern--Simons argument}\label{sec:chern-simons}
In this section, we use the instanton moduli spaces to obstruct $m\PHS \# {-}kO$ from bounding a positive-definite manifold, complementary to the results in Proposition~\ref{prop:d-neg}. 

\begin{proposition}\label{prop:CS-bound}
Let $m$ and $k$ be integers with $m > 0$. Then the manifold $m\PHS \# {-}k O$ cannot bound a positive-definite 4-manifold.  
\end{proposition}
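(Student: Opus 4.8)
The plan is to argue by contradiction: suppose $X$ is a positive-definite $4$-manifold with $\partial X = Y := m\PHS \#{-}kO$. Reversing orientation, $-X$ is a negative-definite filling of $-Y = m(-\PHS) \# k(-O)$. I would set up the gauge theory on $-X$ (or on $X$ with the $\overline{SU(2)}$/instanton conventions that make positive-definiteness the relevant hypothesis, following Donaldson and Frøyshov). The key input is that $\pi_1(\PHS)$ and $\pi_1(O)$ are finite, so the flat $SU(2)$ connections on $Y$ are finite in number and, crucially, the Chern--Simons invariants of $Y$ take values in a finite subset of $\R/\Z$ whose denominators are controlled by $|\pi_1(\PHS)| = 120$ and $|\pi_1(O)| = 8$ (recall $H_1(O)$ is $2$-torsion). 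The essential numerical facts to extract are: (i) on $\PHS$ the minimal positive Chern--Simons value of a nontrivial flat connection, and (ii) the analogous minimal value on $O$, together with the inequality $|\pi_1(O)| < |\pi_1(\PHS)|$ flagged in the introduction.

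The heart of the argument is a dimension/energy count for the ASD moduli space on the (cylindrical-end) manifold $\widehat{X}$ obtained by attaching a cylinder $Y \times [0,\infty)$. On a negative-definite filling, Donaldson-type arguments produce a reducible ASD connection, and more refined Frøyshov-style arguments (as in \cite{NST:def-IHS}) produce a compact component of the moduli space of ASD connections with small energy limiting to flat connections on the boundary. I would run the usual chain: the moduli space $M$ of finite-energy ASD connections on $\widehat X$ with energy below the first positive Chern--Simons level of $Y$ has a reducible, which forces (via the index computation on a positive-definite $X$, exactly as in Donaldson's diagonalization proof) enough of the moduli space to exist; then one pushes energy upward, with each ``jump'' in energy bounded below by the minimal Chern--Simons gap on the summands. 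Because $Y$ is a connected sum, its flat connections and Chern--Simons values are obtained by summing those of the summands, so the Chern--Simons spectrum of $Y$ is contained in $\frac{1}{120}\Z \cup (\text{shifts by } \tfrac14\Z \text{ from the } O \text{ factors}) \pmod 1$; the specific small denominators let one control how many energy levels fit below the threshold at which the moduli space must terminate. The contradiction comes from comparing the number of available energy levels (bounded using $|\pi_1(O)| < |\pi_1(\PHS)|$ and the sign of the $d$-invariants of $O$, which controls the relevant reducibles) against the number forced to exist by the index theorem on the positive-definite $X$; when $m \geq 1$ these cannot be reconciled regardless of $k$.

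I expect the main obstacle to be the bookkeeping of the \emph{reducible} ASD connections and their contribution. On a manifold with $b_1 = 0$ and definite intersection form there are reducibles corresponding to splittings of the (negative-)definite lattice, and one must track how these interact with the flat limits on the connected-sum boundary; here is where all of the $d$-invariants of $O$ being strictly negative enters, presumably to rule out certain reducibles from limiting to the trivial flat connection on the $O$ summands, or to control a Frøyshov-type correction term. A secondary technical point is ensuring the relevant moduli spaces are cut out transversally (or handling this with perturbations) and that the compactness/gluing analysis near the reducibles and near the broken trajectories over $Y$ is valid in the connected-sum, non-integer-homology-sphere setting. Once those points are in place, the final step is purely arithmetic: plug in the Chern--Simons values of $\PHS$ and $O$ and the orders $120$ and $8$, and observe that for $m \geq 1$ the energy budget is exhausted, contradicting the existence of $X$.
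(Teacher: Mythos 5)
Your proposal points at the right toolbox---finite-energy instanton moduli spaces on the cylindrical-end definite filling, with the Chern--Simons spectrum of the spherical summands controlling the possible energies---but it stops short of the construction that actually produces the contradiction, and the step you do describe (``push energy upward \ldots\ the energy budget is exhausted'') is not an argument. The paper's proof first converts the filling into a cobordism $W \colon \PHS \to \sqcup_{m-1}({-}\PHS) \sqcup_{|k|} ({\pm}O)$ with $b_1(W)=b^+(W)=0$ (attaching $3$-handles along the connected-sum spheres and surgering loops generating $H_1(W;\Q)$), so that one copy of $\PHS$ is isolated as the incoming end. It then studies the specific one-dimensional moduli space $M^\pi_{1/120}(W;\alpha_1,\theta')$, where $\alpha_1$ is the irreducible flat connection on $\PHS$ with $\CS(\alpha_1)=\tfrac{1}{120}$. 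The ends obtained by gluing the unique energy-$\tfrac{1}{120}$ instanton on $\R\times\PHS$ to the asymptotically trivial reducible flat connections on $W$ number $|H_1(W)/i_*H_1(\partial W)|$ and all carry the same sign; every other potential end is excluded because breaking off an instanton on $\R\times({\pm}O)$ costs energy at least $\tfrac{1}{48}>\tfrac{1}{120}$ (since $S^3$ is a $48$-fold cover of $O$), and breaking off further instantons on $\R\times({\pm}\PHS)$ costs at least $\tfrac{2}{5}$ or $\tfrac{71}{120}$. The signed count of ends of an oriented $1$-manifold is zero, which is the contradiction. Your sketch never identifies this moduli space, its dimension, or the end-counting mechanism, so the ``purely arithmetic'' final step you defer to is exactly the missing content.

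Two specific points in your write-up are also off. First, $|\pi_1(O)|=48$ (the binary octahedral group), not $8$; what matters is that the minimal positive Chern--Simons gap on $O$ is $\tfrac{1}{48}$, which still exceeds $\tfrac{1}{120}$, so the inequality you need survives, but the number you quote is wrong. Second, the strict negativity of the $d$-invariants of $O$ plays no role in the positive-definite obstruction: it is used only in Section~\ref{sec:d-invariants} against negative-definite fillings. In the Chern--Simons argument the only inputs from $O$ are $|\pi_1(O)|<|\pi_1(\PHS)|$ and the fact that $H_1(O)$ is $2$-torsion (so all reducible flat connections on the boundary are central), and the control of reducibles on $W$ comes from $b_1(W)=b^+(W)=0$, not from correction terms. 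Relatedly, the relevant reducibles are the finitely many \emph{flat} reducibles on $W$, enumerated by $H_1(W)/i_*H_1(\partial W)$, not the square-$(-1)$ classes of Donaldson's diagonalization argument: since the total energy in play is $\tfrac{1}{120}<\tfrac12$, no non-flat reducible can appear in the compactification.
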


To prove this claim, we consider moduli spaces of $SU(2)$-instantons. We first provide a sketch of the argument for non-experts. In Section \ref{sec:moduli}, we discuss generalities about instanton moduli spaces; this section consists of mostly standard facts about such moduli spaces mainly aimed at a non-expert reader who needs a quick review of the relevant background. In Section \ref{sec:details}, we expand on the proof of Proposition \ref{prop:CS-bound} sketched below. 

\begin{proof}[Sketch of proof.] 
Suppose $X$ is a positive-definite compact 4-manifold with boundary $m\PHS \# {-}kO$. Reverse the orientation to obtain a negative-definite manifold with boundary ${-}m\PHS \# kO$. By attaching pairs-of-pants cobordisms $Y \# Y' \to Y \sqcup Y'$ (equivalently, attaching $3$-handles along the connected-sum spheres in the boundary) we obtain a cobordism $W_0: \PHS \to \sqcup_{m-1} {-}\PHS \sqcup_{|k|} \pm O,$ where the sign on $\pm O$ is the sign of $k$ (which we do not assume positive). Let $W$ be the result of performing surgery along a family of embedded loops in $W_0$ which give a basis for $H_1(W_0; \Bbb Q)$. A Mayer-Vietoris argument implies that $b_1(W) = 0$, and that the intersection form is unchanged by addition of $3$-handles and surgery on loops which are nontrivial in rational homology, so that $W$ remains negative-definite. 

We will obtain a contradiction by considering an orientable 1-dimensional moduli space $M$ of instantons on this cobordism $W$ (more precisely, we attach cylindrical ends to $W$ and consider a perturbation of the ASD equation).  Our contradiction will come from showing that the number of ends, counted with sign, is non-zero.  The ends of this moduli space $M$ correspond to gluing instantons on $W$ to instantons on the incoming end $\Bbb R \times P$ or the outgoing ends $\Bbb R \times {\pm} O$ (the sign coinciding with the sign of $k$) and $\Bbb R \times {-}\PHS$. 

We first construct a family of ends of $M$ by gluing a particular instanton on $\Bbb R \times \PHS$ to the reducible flat connections over $W$. These reducibles are determined by $H_1(W;\Z)$, and are isolated and well-behaved with respect to gluing because $b_1(W) = 0$ and $b^+(W) = 0$, respectively, as discussed in the proof of Lemma \ref{lemma:end-counting}(iii). This construction produces as many ends of $M$ as there are elements of $H_1(W;\Z) / H_1( \partial W; \Z)$, and they are all oriented in the same direction. 

We use \emph{topological energy} $\kappa(A)$ of instantons to establish that these are the only ends.  The moduli space $M$ is the moduli space of instantons with topological energy equal to $\frac{1}{120}$.  In general, topological energy is non-negative, additive under gluing of instantons, and multiplicative under passing to covering spaces. An instanton $A$ on $W$ determines flat connections $\alpha$ and $\alpha'$ on the incoming and outgoing boundary components of $W$, and the topological energy $\kappa(A)$ is equal modulo $\Z$ to the difference of \emph{Chern--Simons invariants} $\textrm{CS}(\alpha)-\textrm{CS}(\alpha') \in \Bbb R/\Z$.

There are two key points. \begin{itemize}
\item The instanton on $\Bbb R \times P$ used above has $\kappa(A) = \frac{1}{120}$, and the reducible flat connections on $W$ have $\kappa(A) = 0$. By additivity of energy, the instantons we constructed on $W$ above have energy $\kappa = \frac{1}{120}$, as do all other instantons in the moduli space $M$. All other instantons on $\Bbb R \times P$ have larger energy; when glued to instantons on $W$ they produce instantons of energy larger than $\frac{1}{120}$, which do not lie in $M$.

\item All instantons on $\Bbb R \times \pm O$ have $\kappa(A) \ge \frac{1}{48}$, so also do not contribute to ends of $M$. Here we use the relation to the Chern--Simons invariant: for any flat connection $\alpha$ on $O$, we have $48\textrm{CS}(\alpha) \equiv 0 \in \Bbb R/\Z$. This follows because the universal cover of $O$ is the 3-sphere, where the Chern--Simons invariant of a flat connection is zero; the Chern--Simons invariant is multiplicative under covers and $|\pi_1(O)| = 48$. 
\end{itemize}

Because every end of $M$ is constructed by the gluing procedure (gluing an instanton on a cylindrical end to one on $W$), the only ends are those initially described. In particular, $M$ is a noncompact oriented 1-manifold without boundary with finitely many ends. This gives the desired contradiction: the signed count of ends is $\pm |H_1(W;\Z) / H_1( \partial W; \Z)| \ne 0$, but a noncompact oriented 1-manifold without boundary has zero ends, counted with sign.
\end{proof}

\begin{remark}
	Proposition \ref{prop:CS-bound} holds more generally for any closed oriented 3-manifold $Y$ with $|\pi_1(Y)| < 120$ in 
	place of $O$, with a similar proof. 
\end{remark}

\begin{remark}
	Using moduli spaces of $SU(2)$-instantons to study negative definite smooth closed 4-manifolds
	goes back to Donaldson's groundbreaking work \cite{Don:neg-def-gauge}. 
	Here we use an energy argument to analyze boundary
	components of a 1-dimensional moduli space of $SU(2)$-instantons. Similar strategies appear, for example, in 
	\cite{FS:pseudofree, Fur:hom-cob,FS:HFSF,KH:Wh-dble,P:ind-br}. Another key tool 
	in the study of negative definite 4-manifolds with integer homology sphere boundary
	is Fr\o yshov's invariant $h$ of \cite{Fro:h-inv}.
	In fact, the $d$-invariant used in the previous section is the Heegaard Floer analogue of Fr\o yshov's invariant, and it is expected that $d = 2h$.   
	Topological energy is employed in \cite{D:CS-cob,NST:def-IHS} 
	to construct refinements of Fr\o yshov's invariant. We expect that the invariants of \cite{D:CS-cob,NST:def-IHS} generalize to invariants of rational homology spheres using the results of \cite{mme:eq-inst,DME1}, and that the argument above can be recast in that language.
\end{remark}

\subsection{Properties of instanton moduli spaces}\label{sec:moduli}
Suppose $W$ is a compact 4-manifold with boundary. Choose a metric on $W$ which is cylindrical (identical to the product metric) in a collar neighborhood of the boundary. We will consider instantons on the complete Riemannian manifold $W \cup_{\partial W} [0,\infty) \times \partial W$, where we have attached infinite cylindrical ends. By an abuse of notation, we ignore this subtlety and refer by the same name $W$ to both the compact manifold $W$ and the version with cylindrical ends. By partitioning $\partial W$ into a set of \emph{incoming ends} and \emph{outgoing ends}, we may write that $W: Y \to Y'$ is a cobordism, where $\partial W = Y' \sqcup {-}Y$. 

We are interested in $SU(2)$-connections $A$ on the trivial bundle over $W$ which are asymptotic to a flat connection $\alpha$ over $Y$ and a flat connection $\alpha'$ over $Y'$, and for which the {\it topological energy}, defined as
\[
  \kappa(A):=\frac{1}{8\pi^2}\int_W \tr(F_A\wedge F_A),
\]
is finite. This quantity is constant with respect to continuous deformations of $A$, and its mod $\Z$ value is equal to $\CS(\alpha) - \CS(\alpha')$. (Taking $Y'$ to be empty, this serves as a definition of $\CS(\alpha)$.) Further, as discussed in \cite[Section 3.2]{donaldson-book}, $\kappa$ determines the deformation class of $A$ through connections with flat limits $\alpha, \alpha'$.

We are more specifically interested in those finite-energy connections $A$ on $W$ which satisfy the ASD equation
\[
  F^+(A)=0
\]
with respect to the metric on $W$. Any such connection satisfying the ASD equation is also called an {\it instanton}. Any instanton $A$ satisfies 
\[\kappa(A) = \frac{1}{8\pi^2}|\!|F_A|\!|_{L^2}^2.\] 
In particular, $\kappa(A)\geq 0$, and $\kappa(A)=0$ if and only if $A$ is flat.

There is an infinite-dimensional space of instantons, because the ASD equation is invariant under an infinite-dimensional symmetry group. Define the {\it gauge group} to be the space of all maps $u:W \to SU(2)$ which are asymptotic to a map $v:Y\to SU(2)$ on the incoming end and a map $v':Y'\to SU(2)$ on the outgoing end, regarded as automorphisms of the trivial $SU(2)$-bundle. Then we may pull back any connection $A$ as above with respect to $u$ to obtain a connection $u^*A$ which has the same topological energy as $A$ and is asymptotic to $v^*\alpha$ and $(v')^*\alpha'$ on the incoming and the outgoing ends of $W$. 

The automorphisms $u=\pm I$ act trivially on $A$, and $A$ is called {\it irreducible} if these are the only elements of the stabilizer $\Gamma_A$ of $A$ under the action of the gauge group. The other possibilities for the isomorphism type of $\Gamma_A$ are $U(1)$ and $SU(2)$ where $A$ is called respectively an {\it abelian} and a {\it central} connection. For instance, the trivial connection is a central connection. We use similar terminology to define the three types of connections on 3-manifolds. 

Now for any non-negative real number \[\kappa \equiv \CS(\alpha) - \CS(\alpha') \mod \Z,\] let $M_\kappa(W;\alpha,\alpha')$ denote the \emph{moduli space of instantons}, the set of gauge equivalence classes of instantons $A$ on $W$ with topological energy $\kappa$ which are asymptotic to $\alpha$ along the incoming ends and $\alpha'$ along the outgoing ends. As is recalled below, these moduli spaces are finite-dimensional because they are locally modeled on the solution set to an elliptic equation. 

A special case of interest is when $W=\R\times Y$ for a connected 3-manifold $Y$. For any flat connections $\alpha, \beta$ on $Y$ and any $\kappa$ as above, we have a moduli space $M_\kappa(\R\times Y;\alpha,\beta)$. Translation along the $\R$ factor determines an action of $\R$ on this moduli space. This action is free if $\kappa$ is positive, and the quotient in this case is denoted by $\breve M_\kappa(Y;\alpha,\beta)$. The only instantons with $\kappa = 0$ are constant trajectories, which we exclude by defining $\breve M_\kappa(Y; \alpha, \beta) = \varnothing$. These moduli spaces $\breve M_\kappa(Y;\alpha, \beta)$ consist of irreducible instantons (see, for example, \cite[Proposition 4.14]{mme:eq-inst}). 

Reducible flat connections and instantons complicate the analysis of instanton moduli spaces. 
We restrict attention to a special class of 3-manifolds which are general enough to prove the main theorem, but simple enough to avoid some of the difficulties of the reducibles. 

\begin{definition}
A {\it two-torsion homology sphere} is a closed, connected, oriented 3-manifold $Y$ for which $H_1(Y;\Z)$ consists only of two-torsion; that is, $H_1(Y;\Z) \cong (\Z/2)^a$ for some $a$. 
\end{definition}

Flat connections up to gauge equivalence are in bijection with homomorphisms $\pi_1(Y) \to SU(2)$ up to conjugacy. Reducible flat connections are those whose image is conjugate to a subgroup of $U(1)$, and are classified by homomorphisms $H_1(Y) \to S^1$ up to complex conjugation; the central flat connections are those with image in the center $\{\pm 1\}$, and are classified by homomorphisms $H_1(Y) \to \{\pm 1\}$. Pontryagin duality implies that $Y$ is a two-torsion homology sphere if and only if all flat connections on $Y$ are irreducible or central. 

The 4-manifolds we work with will be cobordisms $W: Y \to Y'$, where $Y$ and $Y'$ are disjoint unions of two-torsion homology spheres, and for which $b_1(W) = b^+(W) = 0$. The first assumption will guarantee that there are finitely many reducibles, while the second assumption will guarantee that reducibles are cut out transversely in the reducible locus. 

We will need an enumeration of these reducibles, in a special case. 
\begin{lemma}\label{lemma:count-reds}
Supposing $W: Y \to Y'$ is a cobordism with $b_1(W) = 0$, write $a$ for the number of abelian flat connections on $W$ which are asymptotically trivial, and write $z$ for the number of central flat connections on $W$ which are asymptotically trivial. Then $z+2a$ is equal to the cardinality of $H_1(W;\Z)/i_*H_1(\partial W;\Z)$, where $i_*$ is the homomorphism induced by inclusion.
\end{lemma}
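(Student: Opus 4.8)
The plan is to reduce the statement to elementary counting in the finite abelian group $G := H_1(W;\Z)/i_*H_1(\partial W;\Z)$, using the classification of reducible flat connections recalled above. Since $W$ is compact and $b_1(W) = 0$, the group $H_1(W;\Z)$ is finite, hence so is $G$, whose cardinality is the right-hand side of the asserted identity; so it suffices to prove $z + 2a = |G|$. The key identification is that the asymptotically trivial reducible (that is, abelian or central) flat connections on $W$ are in bijection with the quotient of $\mathrm{Hom}(G, S^1)$ by the involution $\chi \mapsto \bar\chi = \chi^{-1}$ induced by complex conjugation.

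To establish this, recall that reducible flat connections on $W$ up to gauge equivalence correspond to elements of $\mathrm{Hom}(H_1(W;\Z), S^1)$ modulo that involution, and such a connection $\chi$ restricts on a boundary component $i_j \colon Y_j \hookrightarrow W$ to the flat connection classified by $\chi \circ (i_j)_*$. This restriction is gauge equivalent to the trivial connection precisely when $\chi \circ (i_j)_* = 0$, since $0$ is the unique element of its own conjugation orbit. Hence $\chi$ is asymptotically trivial exactly when it annihilates $i_* H_1(\partial W;\Z) = \sum_j (i_j)_* H_1(Y_j;\Z)$, i.e.\ exactly when it factors through $G$.

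Finally I would count orbits of $\chi \mapsto \chi^{-1}$ on $\mathrm{Hom}(G, S^1)$. Its fixed points are exactly the homomorphisms valued in $\{\pm 1\}$, which are precisely the asymptotically trivial central flat connections, so $z = |\mathrm{Hom}(G, \{\pm 1\})|$; the remaining characters (those with image not contained in $\{\pm 1\}$) are permuted freely in pairs, each pair forming a single gauge-equivalence class of abelian flat connection, so $2a = |\mathrm{Hom}(G,S^1)| - z$. Since $G$ is finite abelian, Pontryagin duality gives $|\mathrm{Hom}(G, S^1)| = |G|$, and adding the two relations yields $z + 2a = |G|$, as desired. The argument is essentially bookkeeping once the classification of reducibles is granted; the only point needing genuine care is the middle step, where triviality of the boundary restriction — a priori only up to gauge equivalence, hence only up to complex conjugation — must still be seen to force $\chi \circ (i_j)_*$ to vanish identically, which it does because $0$ is conjugation-invariant.
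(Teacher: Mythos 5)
Your proof is correct and follows essentially the same route as the paper's: classify asymptotically trivial reducibles as characters of $H_1(W)/i_*H_1(\partial W)$ modulo complex conjugation, count fixed points (central) and free orbits (abelian) of the involution, and apply Pontryagin duality to the finite group. The extra care you take in the middle step (that gauge-triviality of the boundary restriction forces the character to vanish on $i_*H_1(\partial W)$ on the nose) is a worthwhile clarification the paper leaves implicit, but it does not change the argument.
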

\begin{proof}
Reducible flat $SU(2)$-connections on $W$ up to gauge correspond bijectively to homomorphisms $H_1(W) \to S^1 \subset SU(2)$ up to complex conjugation. The asymptotically trivial flat connections correspond to those homomorphisms which vanish on the boundary, or equivalently homomorphisms $H_1(W)/i_* H_1(\partial W) \to S^1$ up to complex conjugation. The central flat connections correspond to the fixed points of complex conjugation, while the abelian flat connections correspond to its free orbits. Thus $z+2a$ can be identified with the number of homomorphisms $H_1(W)/i_* H_1(\partial W) \to S^1$. 

Because $b_1(W) = 0$, the domain is a finite group, and Pontryagin duality non-canonically identifies the set of such homomorphisms with $H_1(W)/i_* H_1(\partial W)$ itself.
\end{proof}

In general, it need not be the case that the flat connections on $Y$ are cut out transversely by the equation $F_\alpha = 0$. Even if they are, it need not be the case that each $\breve M_\kappa(Y; \alpha, \beta)$ is cut out transversely. When both of these properties hold, we say that $Y$ is \emph{regular}. 

Henceforth, suppose that $W: Y \to Y'$ is a cobordism between disjoint unions of two-torsion regular two-torsion homology spheres. 

The local behavior of the moduli space around any instanton $A$ is governed by the {\it ASD operator} $D_A:=d_A^+\oplus d_A^*: \Omega^1 \to \Omega^+ \oplus \Omega^0$, which (on appropriate Sobolev completions) is a Fredholm operator obtained as a combination of the {\it Coulomb gauge} condition and linearizing the instanton equation at $A$ \cite[Chapter 3]{donaldson-book}. When at least one of $A$'s flat limits is reducible, these should be defined on appropriate \emph{weighted} Sobolev completions, as in \cite[Chapter 3.3.3]{donaldson-book}.

If $d_A^+$ is surjective, a neighborhood of $A$ in this moduli space is modeled on $\ker(D_A)/\Gamma_A$, where $\{\pm 1\} \subset \Gamma_A$ acts trivially. In particular, if $A$ is irreducible and $D_A$ is surjective, a neighborhood of $A$ is locally Euclidean; in this case we say that $A$ is cut out transversely. However, it is rarely the case that $D_A$ is surjective for all instantons $A$. This necessitates the introduction of a {\it holonomy perturbation} $\pi$ of the ASD equation. The precise definition of these holonomy perturbations is given in \cite[Definition 4.2]{mme:eq-inst}, following the presentation of \cite[Section 3]{K-HigherRank}.\footnote{Another approach to holonomy perturbations is given in \cite[Section 3.2]{YAFT} and \cite[Chapter 5.4]{donaldson-book}. These perturbation schemes are insufficient in our setting: they vanish automatically on flat connections which are central on the ends, which feature in our argument in an essential way. The former authors do not need to contend with this, as they restrict attention to admissible bundles (which support no reducible connections) and the latter author does not discuss in detail the perturbation scheme on cobordisms.} While the latter article focuses on the case of compact 4-manifolds without boundary, its definition of holonomy perturbation remains well-behaved so long as one demands they vanish on a chosen neighborhood of infinity in $W$. 

A standard argument shows that for a generic choice of $\pi$, the moduli space of solutions to
\begin{equation}\label{per-ASD}
  F^+(A)+\pi(A)=0
\end{equation}
is cut out transversely away from reducible elements of the moduli space \cite[Theorem 4.37]{mme:eq-inst}. We can pick $\pi$ so that the $L^2$ norm of $\pi(A)$ is less than a given positive constant, the perturbation $\pi(A)$ vanishes for any reducible connection $A$, and $\pi(A)$ vanishes outside of a fixed compact submanifold of $W$ for any connection $A$. 

When performing a gluing analysis of the ends of $\breve M^\pi_\kappa(W;\alpha, \alpha')$, we will also need to assume the perturbation is well-behaved on the reducible flat connections $\Lambda$ on $W$. This is somewhat more difficult. First, $D_\Lambda$ will not be surjective, as the cokernel of $d_A^*$ can be identified with the Lie algebra of the stabilizer $\Gamma_A$. 

Any reducible flat connection $\Lambda$ is asymptotically central (as the ends of $W$ are two-torsion homology spheres), so the index calculation of \cite[Proposition 4.26]{mme:eq-inst} drastically simplifies: the `$\rho$-invariant' and `signature data' terms are automatically zero for central connections. Because $\Lambda$ is flat, the characteristic class term is also trivial, so that the formula simply gives $\ind(D_\Lambda) = -3$.

Ideally, we would ensure that $d_\Lambda^+$ is surjective after perturbation, but this is sometimes impossible. When $\Lambda$ is central, the cokernel of $d_A^+$ has dimension $b^+(W) = 0$, and there is no issue. When $\Lambda$ is an abelian flat connection, the map $d_\Lambda^*$ has cokernel of rank $1$, so $d_\Lambda^+$ cannot be surjective: at best it can be injective, with cokernel of rank $2$. The following lemma asserts that this best case scenario can be always achieved.

\begin{lemma}\label{flat-W-pert}Suppose $W: Y \to Y'$ is a cobordism between a disjoint union of regular two-torsion homology spheres, and suppose that $b_1(W) = b^+(W) = 0$. For any $\epsilon > 0$ there is a holonomy perturbation $\pi$ for $W$ such that $\pi(A)$ vanishes for $A$ reducible, we have $\|\pi(A)\|_{L^2} < \epsilon$ for all $A$, and the following hold.
	\begin{itemize}
	\item[(i)] All irreducible solutions $A$ to the perturbed ASD equations have surjective ASD operator, so that the irreducible part of $M^\pi_\kappa(W;\alpha, \alpha')$ is a smooth manifold of dimension equal to $\ind(D_A)$.
	\item[(ii)] For all reducible flat connections $\Lambda$ on $W$, the perturbed ASD operator $D^\pi_\Lambda$ is injective.
	\end{itemize}
\end{lemma}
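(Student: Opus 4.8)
The plan is to establish item (ii) first and then obtain item (i) inside an open set of perturbations on which (ii) continues to hold. Item (i) is the standard Sard--Smale transversality statement for the irreducible part of the moduli space (essentially \cite[Theorem 4.37]{mme:eq-inst}), the only novelty being that we must work inside the class of holonomy perturbations that vanish on all reducible flat connections and near infinity. Item (ii) is where the real work lies: because $\pi$ is required to vanish on reducibles --- so that the reducibles remain solutions of the perturbed equation --- we cannot perturb the linearized ASD operator at a reducible flat connection arbitrarily, and reconciling this constraint with the need to kill cokernels is the crux.

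For (i), I would fix a compact codimension-zero submanifold $V \subset \operatorname{int}(W)$ and let $\mathcal{P}$ be the Banach space of holonomy perturbations supported in $V$ and vanishing on every reducible flat connection on $W$; this is a closed subspace of the perturbations of \cite[Definition 4.2]{mme:eq-inst}. Form the universal moduli space $\{(\pi,[A]) : A \text{ irreducible},\ F^+(A)+\pi(A)=0\}$ and check it is a Banach manifold: an element of the cokernel of the total linearization is $L^2$-orthogonal to $\operatorname{im}(d_A^+)$ and to $\{\rho(A):\rho\in\mathcal{P}\}$, and since an irreducible instanton is not flat --- hence, by unique continuation, has curvature that does not vanish on $V$ --- and is not reducible, the perturbations $\rho(A)$ still form a rich enough family to force such a cokernel element to vanish on $V$; unique continuation for the formal adjoint of the ASD operator then forces it to vanish identically. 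Sard--Smale produces a residual set of $\pi \in \mathcal{P}$ for which the irreducible part of every $M^\pi_\kappa(W;\alpha,\alpha')$ is cut out transversely; intersecting over the countably many relevant energies and keeping $\|\pi\|_{L^2}<\epsilon$ gives (i). The one point needing care here is that the restricted perturbation class is still large enough, which is exactly the parenthetical just made.

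For (ii), note that $b_1(W)=0$ and Lemma~\ref{lemma:count-reds} give only finitely many reducible flat connections $\Lambda_1,\dots,\Lambda_N$, and since injectivity of a Fredholm operator is open it suffices to make $D^\pi_{\Lambda_j}$ injective for each $j$. Decompose $\operatorname{ad}(P)|_{\Lambda_j}$ equivariantly. If $\Lambda_j$ is central this is $\underline{\R}^{\oplus 3}$ and $D_{\Lambda_j}$ is three copies of the untwisted ASD operator $d^+\oplus d^*$, whose kernel is the space of decaying harmonic $1$-forms on $W$ and so vanishes because $b_1(W)=0$; thus $D_{\Lambda_j}$ is already injective, with no perturbation required, consistent with $\ind(D_{\Lambda_j})=-3$ (the cokernel being the $3$-dimensional stabilizer). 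If $\Lambda_j$ is abelian, $\operatorname{ad}(P)|_{\Lambda_j}=\underline{\R}\oplus L$ with $L$ a flat real $2$-plane bundle that is trivial near infinity (because $\Lambda_j$ is asymptotically trivial); then $D_{\Lambda_j}=D^{\underline{\R}}\oplus D^L$, the $\underline{\R}$-part is injective as above, and the whole problem collapses to killing the finite-dimensional space $\ker D^L$ (where $\ind(D^L)=-2$) by a perturbation that still vanishes on every reducible.

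The key observation for the abelian case is that a holonomy perturbation built from a loop $\gamma$ and a conjugation-equivariant function vanishing at $\operatorname{Hol}_\gamma(\Lambda_i)$ for all $i$ has $\pi(\Lambda_i)=0$ for all $i$, yet its linearization at $\Lambda_j$ is governed by the derivative of that function at $\operatorname{Hol}_\gamma(\Lambda_j)\in U(1)$, which under $U(1)$-equivariance is a freely chosen real scalar on the $\underline{\R}$-summand and a freely chosen complex scalar on the $L$-summand. Sweeping $\gamma$ over a family of loops that fills $V$, and varying the supporting self-dual $2$-form and the complex scalar, produces a family of compactly supported zeroth-order operators $\Omega^1(W;L)\to\Omega^+(W;L)$ rich enough that a Sard--Smale argument on the parametrized kernel $\{(\pi,a):a\in\ker(D^L+(d\pi)_{\Lambda_j}|_L),\ \|a\|=1\}$ --- using unique continuation for the twisted operator to pass from ``vanishes on $V$'' to ``vanishes identically'' --- shows the admissible $\pi$ making $D^L+(d\pi)_{\Lambda_j}|_L$ injective are dense. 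Carrying this out for $j=1,\dots,N$ in turn, invoking openness to protect the earlier $\Lambda_i$ and keeping the accumulated perturbation small, achieves (ii); one then runs the argument for (i) inside the open set of perturbations near the one just built where (ii) persists, remaining within the $\epsilon$-ball. The hard part is precisely this abelian case of (ii): reconciling ``$\pi$ vanishes on reducibles'' with ``$(d\pi)_{\Lambda_j}$ is rich enough to kill $\ker D^L$'', which I would resolve by the observation that the vanishing constraint only constrains the central directions, together with the twisted unique continuation statement.
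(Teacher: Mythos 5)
Your proposal is correct and follows essentially the same route as the paper's proof: item (i) is the standard genericity statement for the irreducible locus, the central reducibles need no perturbation because $b_1(W)=b^+(W)=0$ makes $d^+\oplus d^*$ injective, and the abelian case reduces (via the same splitting into the reducible-locus direction of index $-1$ and the normal direction of index $-2$) to killing the kernel of the normal operator by a small generic admissible perturbation. The only difference is cosmetic: the paper phrases the last step as transversality of $\pi\mapsto D^{\nu}_{\pi,\Lambda}$ to the locus of non-injective $S^1$-equivariant Fredholm operators of index $-2$ (a countable union of submanifolds of codimension at least $2$), deferring the richness of the perturbation family to \cite[Theorem 4.37]{mme:eq-inst}, whereas you run the equivalent parametrized-kernel Sard--Smale argument and make explicit the key point that requiring $\pi$ to vanish at the reducibles constrains only its value there, not its linearization.
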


We call such a perturbation a {\it small regular perturbation}. 

\begin{proof}
As discussed above, perturbations satisfying (i) are generic. 

Because reducible connections on a cobordism with $b^+(W) = 0$ are cut out transversely in the reducible locus \cite[Lemma 4.20]{mme:eq-inst}, the perturbation $\pi(A)$ can be assumed to vanish when $A$ is reducible. The ASD operators coincide for any central connection (as the ASD operator only depends on the associated connection on the adjoint $SO(3)$-bundle). When $b_1(W) = b^+(W) = 0$, the ASD operator for the trivial connection has $D_\Lambda$ injective with cokernel of rank $3$, isomorphic to the cokernel of $d_\Lambda^*$ as $d_\Lambda^+$ is surjective. It follows that the same is true for any central connection. Because injectivity is an open condition, the same holds for any small perturbation $\pi$. 

That we may choose $\pi$ so that this is also true for abelian flat connections follows from the argument of \cite[Theorem 4.37]{mme:eq-inst}; see also \cite[Section 7.3]{CDX:surgery-n-gon} for a similar discussion and conclusion. For completeness, we will outline the argument here. 

Suppose $\Lambda$ is a flat abelian connection on $W$. As discussed above, $\ind(D_\Lambda) = -3$. Because $D_\Lambda$ is a $\Gamma_\Lambda \cong S^1$-equivariant Fredholm operator, this index breaks into a sum of two terms: the index internal to the reducible locus (which is $-1$ by Hodge theory) and the \emph{normal index} (the index of this operator normal to the reducible locus), which is thus $-2$. Write $D_{\pi,\Lambda}^\nu$ for this normal part.

While $\pi$ is trivial on the reducible locus, it is non-trivial normal to the reducible locus. For each flat reducible $\Lambda$, sending $\pi \mapsto D_{\pi,\Lambda}^\nu$ defines a map from the space of perturbations to the space of $S^1$-equivariant Fredholm operators of index $-2$. One argues using the fact that the normal index is non-positive that this map is transverse at $\pi = 0$ to the locus of operators with nonzero kernel (a union of countably many submanifolds, all of codimension at least $2$), and thus that for small generic $\pi$ the operator $D^\pi_\Lambda$ is injective.\footnote{If the normal index were non-negative, one could instead argue that generically the normal part of the perturbed ASD operator is surjective.}

The assumption that $b_1(W) = 0$ guarantees that there are only finitely many reducible flat connections, so that for generic small $\pi$ and all reducible flat $\Lambda$ the operator $D^\pi_\Lambda$ is injective.
\end{proof}

Supposing that $\pi$ is sufficiently small, the topological energy behaves as it does for unperturbed instantons:

\begin{lemma}\label{lemma:top-energy}
If $(W,\pi)$ is as in Lemma \ref{flat-W-pert}, and $\pi$ is sufficiently small, then all instantons on $W$ have non-negative topological energy. Further, if $A$ is a reducible solution to the perturbed ASD equations, then $\kappa(A) \in \frac 12 \Z$ is a non-negative half-integer, zero if and only if $A$ is flat.
\end{lemma}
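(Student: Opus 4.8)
The first claim—that all instantons on $W$ have non-negative topological energy—should follow exactly as in the unperturbed case once $\pi$ is small enough. The point is that $\kappa(A) = \frac{1}{8\pi^2}\|F_A\|_{L^2}^2$ only uses that $F^+(A) = 0$; under the perturbed equation we instead have $F^+(A) = -\pi(A)$, so $\|F_A\|_{L^2}^2 = \|F^-(A)\|_{L^2}^2 - \|F^+(A)\|_{L^2}^2 + 2\|F^+(A)\|_{L^2}^2 = \|F^-\|^2 - \|F^+\|^2 + 2\|\pi(A)\|_{L^2}^2$, while $8\pi^2\kappa(A) = \|F^-\|^2 - \|F^+\|^2$. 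Hence $8\pi^2\kappa(A) = \|F_A\|_{L^2}^2 - 2\|\pi(A)\|_{L^2}^2 \geq \|F_A\|_{L^2}^2 - 2\epsilon^2$. This is not quite non-negativity on the nose, so the honest argument is: $\kappa(A)$ takes values in a discrete subset of $\R$ (it is congruent mod $\Z$ to $\CS(\alpha) - \CS(\alpha')$ for the flat limits $\alpha,\alpha'$, and by Lemmas \ref{lem:d-comp}-style finiteness of flat connections on two-torsion homology spheres there are only finitely many such values in any bounded window), so once $\epsilon$ is chosen smaller than the distance from $0$ to the largest negative value in this discrete set, $8\pi^2\kappa(A) \geq -2\epsilon^2$ forces $\kappa(A) \geq 0$. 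I would spell out this discreteness point carefully, since it is what makes "sufficiently small" meaningful.

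For the second claim, let $A$ be a reducible solution of the perturbed ASD equation. By Lemma \ref{flat-W-pert}, $\pi$ vanishes on reducible connections, so $A$ in fact satisfies the genuine ASD equation $F^+(A) = 0$; thus $A$ is an honest ASD reducible connection on $W$. A reducible $SU(2)$-connection splits (up to gauge) as $L \oplus L^{-1}$ for a $U(1)$-connection on a line bundle $L$, and the ASD condition becomes $F^+(L) = 0$, i.e. $L$ is an anti-self-dual $U(1)$-connection. Then $8\pi^2\kappa(A) = \|F_A\|_{L^2}^2 = 2\|F_L\|_{L^2}^2$ (the factor $2$ from the two summands), and since $F_L$ is anti-self-dual, $\|F_L\|_{L^2}^2 = -\int_W F_L \wedge F_L = -4\pi^2 c_1(L)^2[W]$ evaluated against the intersection form. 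Because $W$ is negative-definite with $b_1(W) = 0$ and two-torsion boundary, $H^2(W;\Z)$ modulo torsion carries a negative-definite form, so $-c_1(L)^2[W] \geq 0$, and in fact $c_1(L)^2[W] \in \Z$ once we account for the fact that $c_1(L)$ may be a rational class (its image in $H^2(W;\Q)$ need not be integral because of torsion in $H^2(\partial W)$). The cleanest route: $\kappa(A) = -\tfrac14 \langle c_1(L)^2, [W,\partial W]\rangle$ computed via the relative intersection form, which for a negative-definite 4-manifold with torsion boundary takes values in $\tfrac14\Z_{\geq 0}$ in general—but here I must get the denominator right.

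The key input for nailing the denominator is that $H_1(\partial W;\Z)$ is two-torsion (since each boundary component is a two-torsion homology sphere), hence $H^2(W,\partial W;\Z) \to H^2(W;\Z)$ has cokernel killed by $2$; consequently $2 c_1(L)$ lifts to an integral relative class, and the relative self-intersection of that lift lies in $4\Z$ (it is $(2c_1(L))^2$). Therefore $\kappa(A) = -\tfrac14 c_1(L)^2[W] = -\tfrac{1}{16}(2c_1(L))^2[W] \in \tfrac12\Z$. Non-negativity follows from negative-definiteness of the relative intersection form (Poincaré–Lefschetz duality identifies it with the negative-definite form on the free part of $H^2(W;\Z)$, using $b^+(W) = 0$), and $\kappa(A) = 0$ iff $c_1(L)^2[W] = 0$ iff $F_L \equiv 0$ (definiteness has no isotropic vectors over $\Q$) iff $A$ is flat. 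I would also remark that this matches the count in Lemma \ref{lemma:count-reds}: flat reducibles are exactly the $\kappa = 0$ reducibles and are enumerated by $H_1(W;\Z)/i_*H_1(\partial W;\Z)$.

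The main obstacle I anticipate is getting the arithmetic of the denominator exactly right—showing $\kappa(A) \in \tfrac12\Z$ rather than merely $\tfrac14\Z$—which is precisely where the two-torsion hypothesis on the boundary (rather than a general rational homology sphere) is used. One has to be careful that $c_1(L)$ restricted to each end is $2$-torsion (forced since $A$ is asymptotically central), track how the relative and absolute second cohomology interact through the long exact sequence of the pair, and confirm that the factor-of-$2$ from the $L \oplus L^{-1}$ splitting combines correctly with the factor $\tfrac14$ from $-c_1^2/4$. The first claim's "sufficiently small" is comparatively routine given the discreteness of energy values, but it does need to be stated, since it is genuinely an additional smallness hypothesis beyond the $\|\pi(A)\|_{L^2} < \epsilon$ already imposed in Lemma \ref{flat-W-pert}.
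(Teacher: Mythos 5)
Your argument for the first claim (non-negativity) is exactly the paper's: the identity $8\pi^2\kappa(A) = \|F_A\|_{L^2}^2 - 2\|\pi(A)\|_{L^2}^2$ for solutions of the perturbed equation, plus discreteness of the possible values of $\kappa$ modulo $\Z$ coming from the finitely many flat connections on the ends. That part matches and is fine.

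For the second claim you take a genuinely different, topological route (splitting $A \cong L \oplus L^{-1}$ and computing $\kappa$ from $c_1(L)$), and this is where there is a real gap. First, a coefficient inconsistency: your $L^2$ computation correctly gives $8\pi^2\kappa(A) = 2\|F_L\|^2 = -8\pi^2\, c_1(L)^2[W]$, i.e.\ $\kappa(A) = -c_1(L)^2[W]$, but you then switch to $\kappa(A) = -\tfrac14 c_1(L)^2[W]$; the former is the correct normalization for the splitting $L \oplus L^{-1}$. More seriously, the divisibility step does not close. From the fact that $2c_1(L)$ restricts to zero on $\partial W$ and hence lifts to an integral relative class $\tilde y$, the honest conclusion is only that $\tilde y^2 = (2c_1(L))^2 = 4c_1(L)^2$ is an \emph{integer}; your assertion that it lies in $4\Z$ is precisely equivalent to $c_1(L)^2 \in \Z$, which is the point at issue and is false in general. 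So this route, as written, yields only $\kappa(A) \in \tfrac14\Z$ (as you yourself anticipate). To promote $\tfrac14$ to $\tfrac12$ topologically one needs the additional input that $c_1(L)^2 \bmod \Z$ is computed by the linking form of $c_1(L)|_{\partial W}$, and that the linking form of a rational homology sphere evaluated on a $2$-torsion class takes values in $\tfrac12\Z/\Z$. The paper sidesteps all of this with a short analytic argument: the flat limits $\alpha, \alpha'$ of a reducible $A$ are central, each pulls back to the trivial connection on the double cover associated to the kernel of its holonomy, so $2\CS(\alpha) \equiv 0 \bmod \Z$; combined with $\kappa(A) \equiv \CS(\alpha) - \CS(\alpha') \bmod \Z$ and the already-established non-negativity this gives $\kappa(A) \in \tfrac12\Z_{\geq 0}$ at once, and ``zero iff flat'' follows because $\pi$ vanishes on reducibles, so $A$ is an honest ASD connection with $\kappa(A) = \tfrac{1}{8\pi^2}\|F_A\|_{L^2}^2$.
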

\begin{proof}
For any connection $A$ we have 
	\begin{align*}
	  \kappa(A)&= \frac{1}{8\pi^2} \int_W \tr(F_A \wedge F_A) = \frac{1}{8\pi^2} \left(\|F_A\|_{L^2}^2 - 2\|F_A^+\|_{L^2}^2\right)
	\end{align*}
Thus for any solution of \eqref{per-ASD} we have $\kappa(A) = \frac{1}{8\pi^2}\left(\|F_A\|_{L^2}^2 -2\|\pi(A)\|_{L^2}^2\right)$.
	
By taking $\pi$ small enough, we can guarantee that $\kappa(A)$ is greater than $-\epsilon$ for any fixed positive constant $\epsilon$. The mod $\Z$ value of $\kappa(A)$ belongs to a fixed finite set, as the mod $\Z$ value is determined by the Chern--Simons values of the flat connections $A$ is asymptotic to, and each component of $\partial W$ supports finitely many gauge equivalence classes of flat connections. The first part of the claim follows as soon as $$\epsilon < \frac 12 \text{min}\{|\CS(\alpha) - \CS(\beta)| \mid \alpha \text{ flat on } Y, \; \beta \text{ flat on } Y', \; \CS(\alpha) \ne \CS(\beta)\}.$$
	
As for the second part of the claim, because the boundary of $W$ is a disjoint union of 2-torsion homology spheres, the limits of $A$ are central. The Chern--Simons invariant of a central connection is $0$ or $\frac 12$: if a central connection $\theta$ on a 3-manifold $M$ has holonomy representation $\varphi: \pi_1(M) \to \{\pm 1\}$, and $\tilde M$ is the double-cover associated to $\ker(\varphi)$, then $\theta$ lifts to the trivial connection on $\tilde M$ which has zero Chern--Simons invariant. Thus $2\CS(\theta) \equiv 0 \mod \Z$, and it then follows that $\kappa(A) \in \frac 12\Z$. Finally, because the perturbation vanishes on reducible connections, a perturbed reducible instanton is an unperturbed instanton, and unperturbed instantons are flat if and only if they have $\kappa(A) = 0$. 
\end{proof}

When $b_1(W) = b^+(W) = 0$, the discussion of \cite[Chapter 5.4]{donaldson-book} shows that moduli spaces of instantons are canonically oriented; for general cobordisms $W$ between rational homology spheres, one must choose an orientation on $H^0(W) \oplus H^1(W) \oplus H^+(W)$. 

Given a regular perturbation $\pi$ and a one-dimensional moduli space $M_\kappa^\pi(W; \alpha, \alpha')$, we can apply gluing theory to explicitly determine the oriented ends of this moduli space in terms of the moduli spaces of $W$, $Y$ and $Y'$. A careful description of these ends requires case analysis depending on the reducibility type of $\alpha, \alpha'$, and whether the instantons being `glued' are irreducible or reducible. To cut down on case analysis, we describe the gluing result only in the case of interest to us. 

Suppose $W: Y \to Y'$ is a cobordism, where $Y$ is a regular integer homology sphere and $Y'$ is a disjoint union of regular $2$-torsion homology spheres. In the next statement, we will write $\theta$ for the trivial connection on $Y$ and $\theta'$ for the trivial connection on each component of $Y'$.

Suppose $W$ has $b_1(W) = b^+(W) = 0$, and is equipped with a small regular perturbation $\pi$. Fix $\kappa \in (0, \frac 12)$ and an {\it irreducible} flat connection $\alpha$ on $Y$ such that the moduli space $M^\pi_\kappa(W; \alpha, \theta')$ is a 1-dimensional oriented smooth manifold.

\begin{lemma}\label{lemma:end-counting}
In the situation above, the moduli space $M^\pi_\kappa(W; \alpha, \theta')$ has finitely many ends. The oriented finite set of ends may be identified as the disjoint union of the following:

\begin{itemize}
\item[(i)] A disjoint union over all products $\breve M_{\kappa_1}(Y; \alpha, \beta) \times M^\pi_{\kappa_2}(W;\beta,\theta')$ in which $\beta$ is irreducible, $\kappa_1 + \kappa_2 = \kappa$, and both factors are zero-dimensional;
\item[(ii)] A disjoint union over all products $M^\pi_{\kappa_1}(W; \alpha, \beta' \sqcup \theta') \times \breve M_{\kappa_2}(N';\beta', \theta')$, where $N'$ is a connected component of $Y'$, $\beta'$ is an irreducible flat connection on $N'$, $\kappa_1 +\kappa_2 = \kappa$, and both factors are zero-dimensional;
\item[(iii)] A disjoint union over $|H_1(W)/i_*H_1(\partial W)|$ copies of $\breve M_{\kappa}(Y; \alpha, \theta)$, all with the same orientation.
\end{itemize}
\end{lemma}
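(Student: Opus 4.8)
The plan is to prove Lemma~\ref{lemma:end-counting} by the standard compactness-plus-gluing package for instanton moduli spaces on cobordisms with cylindrical ends, adapted to our hypotheses $b_1(W)=b^+(W)=0$ and the boundary being $2$-torsion homology spheres. First I would recall the Uhlenbeck-type compactness statement for the perturbed moduli space $M^\pi_\kappa(W;\alpha,\theta')$: any sequence of instantons has a subsequence converging, after passing to a broken trajectory, to an ideal instanton consisting of a connection on $W$ together with instanton trajectories on the incoming cylinder $\R\times Y$ and on the outgoing cylinders $\R\times N'$, with no interior bubbling generically (bubbling costs energy $\geq 1$, and $\kappa<\tfrac12$), and with the total energy of the pieces summing to $\kappa$. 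Since $M^\pi_\kappa(W;\alpha,\theta')$ is $1$-dimensional, the only possible codimension-$1$ degenerations are single breakings: either a trajectory splitting off at the incoming end (giving a point of $\breve M_{\kappa_1}(Y;\alpha,\beta)\times M^\pi_{\kappa_2}(W;\beta,\theta')$) or at an outgoing end (giving a point of $M^\pi_{\kappa_1}(W;\alpha,\beta'\sqcup\theta')\times\breve M_{\kappa_2}(N';\beta',\theta')$). Dimension counting forces both factors to be $0$-dimensional, since the expected dimension of $\breve M$ drops by one relative to $M$ on the cylinder and the fibered-product constraint accounts for the remaining unit of dimension; this gives the list of \emph{potential} ends in (i), (ii), and the degenerations where the $W$-piece is a reducible flat connection, which feeds (iii).

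Next I would treat the reducible limits, which is where the hypotheses on $W$ do the real work. If a sequence in $M^\pi_\kappa(W;\alpha,\theta')$ limits to a broken object whose $W$-component is a reducible flat connection $\Lambda$, then because $\pi$ vanishes on reducibles (Lemma~\ref{flat-W-pert}) and $\partial W$ is $2$-torsion, $\Lambda$ is asymptotically central; by Lemma~\ref{lemma:top-energy} it has $\kappa(\Lambda)\in\tfrac12\Z_{\geq0}$, hence $\kappa(\Lambda)=0$ since $\kappa<\tfrac12$, so $\Lambda$ is one of the finitely many asymptotically-trivial reducible flat connections enumerated in Lemma~\ref{lemma:count-reds}. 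Its flat limit on each component of $Y'$ must be $\theta'$ and on $Y$ must be $\theta$ (central on $Y$, hence trivial since $Y$ is an integer homology sphere), so the rest of the energy $\kappa$ is carried entirely by a trajectory in $\breve M_\kappa(Y;\alpha,\theta)$ glued on at the incoming end. Conversely I would invoke the gluing theorem: near each such configuration, one can glue a trajectory in the (zero-dimensional, cut-out-transversely by regularity of $Y$) moduli space $\breve M_\kappa(Y;\alpha,\theta)$ to a neighborhood of $\Lambda$ in $W$. The key analytic input is Lemma~\ref{flat-W-pert}(ii): $D^\pi_\Lambda$ is injective, with cokernel of rank $3$ if $\Lambda$ is central and rank $2$ if $\Lambda$ is abelian, matching precisely the obstruction space for gluing in an instanton from $\R\times Y$ of index $3$; the upshot is that each reducible $\Lambda$ contributes $z$ resp. $2a$ ends (one per choice of gluing parameter in the stabilizer quotient for abelian $\Lambda$, since $U(1)/\{\pm1\}\cong S^1$ splits the relevant sphere of gluing parameters into two ends), and by Lemma~\ref{lemma:count-reds} $z+2a=|H_1(W)/i_*H_1(\partial W)|$. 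The orientation claim — that all these ends point the same way — follows from the canonical orientation conventions of \cite[Chapter 5.4]{donaldson-book}: the gluing sign is governed by the orientation of the trajectory in $\breve M_\kappa(Y;\alpha,\theta)$ and the determinant line at $\Lambda$, both of which are the same for every $\Lambda$, so the signs agree.

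Finally I would assemble the three contributions and invoke the standard fact that these exhaust all ends: every sequence without a convergent subsequence in the interior degenerates in exactly one of the ways above, and conversely every such broken configuration is the limit of a unique ray in $M^\pi_\kappa(W;\alpha,\theta')$ by gluing. That $M^\pi_\kappa(W;\alpha,\theta')$ has only \emph{finitely many} ends follows because there are finitely many flat connections $\beta$ (resp. $\beta'$) on each boundary component, finitely many reducibles $\Lambda$, and each relevant zero-dimensional moduli space is compact, hence finite. I expect the main obstacle to be the gluing analysis at the reducible flat connections in case (iii): one must verify that the excess cokernel of $D^\pi_\Lambda$ is exactly absorbed by the index-$3$ trajectory being glued on (so the glued moduli space is genuinely $1$-dimensional and smooth up to its end), and one must correctly count the number of ends produced — in particular handling the abelian case, where the $S^1$-stabilizer means the local model for the end is $(0,\infty)\times(S^1/\{\pm1\})$ contributing two ends rather than one, so that an abelian $\Lambda$ contributes $2$ and the total is $z+2a$ as claimed. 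This reducible gluing is exactly the content referenced in the sketch via ``isolated and well-behaved with respect to gluing because $b_1(W)=0$ and $b^+(W)=0$,'' and the cited analogues in \cite[Section 7.3]{CDX:surgery-n-gon} and \cite{mme:eq-inst} provide the template.
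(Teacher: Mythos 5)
Your overall architecture matches the paper's: compactness by broken trajectories with no Uhlenbeck bubbling (since $\kappa<1$), an index count forcing either a single irreducible breaking (cases (i), (ii)) or a breaking through a central intermediate limit with the $W$-piece a flat reducible $\Lambda$ of index $-3$ (case (iii)), and the identification $z+2a=|H_1(W)/i_*H_1(\partial W)|$ via Lemma~\ref{lemma:count-reds}. The gap is in your treatment of the gluing at an abelian $\Lambda$, which is the one genuinely delicate step. First, a bookkeeping error: since $\ind(D^\pi_\Lambda)=-3$ and $D^\pi_\Lambda$ is injective, its cokernel has rank $3$ for \emph{both} central and abelian $\Lambda$; what distinguishes the abelian case is how that rank $3$ splits, namely rank $1$ from $d_\Lambda^*$ (the Lie algebra of $\Gamma_\Lambda\cong U(1)$) plus rank $2$ from $d_\Lambda^+$, so that $d_\Lambda^+$ fails to be surjective and the gluing is \emph{obstructed}, whereas for central $\Lambda$ the entire cokernel sits in $d_\Lambda^*$ and $d_\Lambda^+$ is surjective by Hodge theory (using $b^+(W)=0$), so standard unobstructed gluing applies.

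Second, and more seriously, your mechanism for the factor of $2$ does not work. A local model ``$(0,\infty)\times(S^1/\{\pm1\})$'' is two-dimensional and cannot describe an end of a $1$-manifold, and ``the stabilizer quotient splits the sphere of gluing parameters into two ends'' is not an argument. The correct statement, which the paper takes from \cite[Example 4.8(ii)]{Don:ori} (see also \cite[Section 6.1]{DME1}), is that the gluing parameters form an $S^2$, the rank-$2$ cokernel of $d_\Lambda^+$ defines an obstruction section of a line bundle of Euler class $2$ over this $S^2$, and the ends are modeled on $(0,\infty)$ times the zero set of that section: two points, hence two coherently oriented ends per abelian $\Lambda$. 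Without this, you have no justification that an abelian reducible contributes exactly two ends with the same sign rather than one, zero, or a cancelling pair, and the total $z+2a$ --- which is precisely what makes the signed end count equal $|H_1(W)/i_*H_1(\partial W)|\neq 0$ in the proof of Proposition~\ref{prop:CS-bound} --- is unsupported. The remaining ingredients of your sketch (exclusion of bubbles, reduction of the reducible $W$-piece to an asymptotically trivial flat connection via Lemma~\ref{lemma:top-energy}, orientation coherence via \cite[Example 4.8(i)]{Don:ori}, and finiteness of the ends) agree with the paper.
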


\begin{proof}
Our goal is to construct a finite set of open embeddings $(0,\infty) \hookrightarrow M_\kappa(W; \alpha, \theta')$ with disjoint image, one for each element enumerated above, so that the complement of their union is compact.

The key is that this moduli space admits a compactification $M^+_\kappa(W; \alpha, \theta')$ by adding \emph{broken trajectories} and \emph{ideal instantons} (corresponding to Uhlenbeck bubbling limits), as in \cite[Proposition 5.5]{donaldson-book}. While Donaldson's interest is primarily in the case where the ends of $W$ are modelled on integer homology spheres, the standing assumption in \cite[Section 5.1]{donaldson-book} is merely that for each flat connection $\beta$ over one of the connected components $N$ of $\partial W$, we have $H^1(N;\beta) = 0$. This follows from our assumption that the boundary of $W$ is a disjoint union over \emph{regular} two-torsion homology spheres.

The compactification includes `weak limits' (Uhlenbeck bubbles) in which energy accumulates at $k \ge 1$ points in $W$, and the limiting (broken) instanton $A$ has $\kappa(A) \le \kappa - k$; but because $\kappa(A) \ge 0$ and we assume $\kappa < 1$, such bubble points do not appear.

The relevant limits take the following form: a sequence $A_n$ of instantons in $M^\pi_\kappa(W;\alpha, \theta')$ may converge to a sequence $(B_1, \cdots, B_n, A, C_1, \cdots, C_m)$, where each $B_i$ is an instanton on $\Bbb R \times Y$, each $C_j$ is an instanton over $\Bbb R \times Y'$ which is constant on all but one connected component, $A$ is an instanton over $W$, and the limit at $+\infty$ of each instanton agrees with the limit at $-\infty$ of the previous. We say that the flat limits at $+\infty$ of $B_1, \cdots, B_n$, and the flat limits at $-\infty$ of the non-constant components of $C_1, \cdots, C_{m}$ are `intermediate' flat connections. Write $r$ for the number of these which are reducible, hence central. Because no Uhlenbeck bubble point appears, \cite[Proposition 5.7]{donaldson-book} gives 
\begin{equation}\label{eq:index}\sum_{i=1}^n \ind(D_{B_i}) + \ind(D_A) + 3r + \sum_{j=1}^m \ind(D_{C_j}) = \ind(D_{A_n}) = 1.\end{equation} 
To see the appearance of $3r$ in Donaldson's formula, note that his $\text{rank}(H^0_\rho)$ is the dimension of the stabilizer of $\rho$; it is trivial when the intermediate flat connection $\rho$ is irreducible and is rank $3$ when the intermediate flat connection is reducible (hence central, because we assume $\partial W$ supports no abelian flat connections). Each $\ind(D_{B_i})$ and $\ind(D_{C_j})$ is at least $1$, as these compute the dimensions of nonempty smooth manifolds with a free $\Bbb R$-action, so we have 
\begin{equation}\label{eq:index-2}
1 \ge n + m + 3r + \ind(D_A). 
\end{equation} 
We will now split into two cases depending on whether $A$ is irreducible or reducible.

If $A$ is irreducible, we have $\ind(D_A) \ge 0$, as this computes the dimension of the nonempty smooth manifold of irreducible solutions to the perturbed ASD equations. It follows that $r = 0$ and $(n,m)$ is either $(1,0)$ or $(0,1)$, with $\ind(D_A) = 0$ and $\ind(D_B) = 1$ in the first case or $\ind(D_C) = 1$ in the second case, corresponding to cases (i) or (ii) in the enumeration. In these cases, \cite[Theorem 4.17]{donaldson-book} implies that a punctured neighborhood of $(B, A)$ or $(A,C)$ in the compactification may be identified with $(0, \infty)$. This gives precisely one end of $M^\pi_\kappa(W; \alpha, \theta')$ per such pair, as in parts (i) and (ii) of the enumeration. 

If $A$ is reducible, then by Lemma \ref{lemma:top-energy} $\kappa(A)$ must be zero and $A$ must be flat. To see this, first observe that by the same lemma, perturbed solutions to the ASD equations have non-negative topological energy. Secondly, topological energy is additive under gluing of instantons, so $$0 \le \kappa(A) \le \sum \kappa(B_i) + \kappa(A) + \sum \kappa(C_j) = \kappa < \frac 12,$$ the final inequality by assumption on $\kappa$. Thus $\kappa(A)$ is a half-integer with $0 \le \kappa(A) < \frac 12$, hence zero.

As discussed before the proof of Lemma \ref{flat-W-pert}, because $A$ is flat and central on the ends, we have $\ind(A) = -3$. Because the incoming end is a central flat connection, it is not $\alpha$, and so is an intermediate flat connection; thus $r \ge 1$ and $n \ge 1$. It follows from \eqref{eq:index-2} that $n = r = 1$ and $m = 0$. Further, because $Y$ is an integer homology sphere, $A$ is trivial at $-\infty$. Because $m = 0$, the asymptotic value of $A$ at $+\infty$ coincides with the asymptotic value of the connections $A_n$ at $+\infty$, so we also have that $A$ is trivial at $+\infty$. Therefore, the possible limits are reduced precisely to pairs $(B, \Lambda)$, where $B \in \breve M_\kappa(Y; \alpha, \theta)$ and $\Lambda \in M^\pi_{0}(W; \theta, \theta')$. Because this moduli space consists of connections of index $-3$, and the irreducible locus is cut out transversely, it contains no irreducible connections whatsoever. Further, because $\pi$ vanishes on the reducible locus, $M^\pi_0(W;\theta, \theta')$ coincides with the set of flat reducible connections which are trivial on the ends. 

When $\Lambda$ is central, the map $d_\Lambda^+$ is surjective by Hodge theory, so the discussion of \cite[Chapter 4.4.1]{donaldson-book} (which relies on the assumption that $d_\Lambda^+$ is surjective) applies, giving an identification of a punctured neighborhood of each $(B, \Lambda)$ in the compactification with an oriented copy of $(0, \infty)$. See also \cite[Example 4.8(i)]{Don:ori}, which implies that all of the ends with fixed $B$ are oriented the same way, independent of the central connection $\Lambda$.

When $\Lambda$ is abelian, the map $d_\Lambda^+$ is not surjective (it has a cokernel of rank $2$), and we are instead in the situation of obstructed gluing theory. Here we instead apply the argument of \cite[Example 4.8(ii)]{Don:ori}, which implies that this gluing procedure contributes \emph{two} ends, all oriented the same for fixed $B$ (and the same as in the previous paragraph). Briefly, these ends correspond to the zero set of a section of a line bundle over the space of gluing parameters $S^2$, and this line bundle has Euler class $2$; a similar discussion can be found in \cite[Section 6.1]{DME1}.

In total, these contribute $2a+z$ copies of $\breve M^+_\kappa(Y; \alpha, \theta)$, all with the same orientation, to the set of oriented ends. By Corollary \ref{lemma:count-reds}, this coincides with the stated cardinality.

By the Uhlenbeck compactness theorem, a sequence of instantons $A_n$ in $M_\kappa^\pi(W; \alpha, \theta')$ has a subsequence which converges to one of the previously enumerated broken trajectories, so eventually lies in one of the neighborhoods enumerated above. If $A_n$ lies wholly in the complement of these neighborhoods, it must have a convergent subsequence in $M_\kappa^\pi(W; \alpha, \theta')$ itself, as desired.
\end{proof}

\subsection{Detailed argument}\label{sec:details}
Our manifold $W$ will be a cobordism from $\PHS$ to a disjoint union of some number of copies of ${-}\PHS$ and $\pm O$. Notice that $H_1(P;\Z) = 0$ and $H_1(O; \Z) = (\Z/2)^2$, so these are two-torsion homology spheres. 

To apply the results of the previous section, we need to recall some facts about the moduli spaces of instantons on these 3-manifolds. 

\begin{lemma}\label{PO-basic-prop}
If $Y$ is a spherical 3-manifold, then $Y$ is regular with respect to the spherical metric. We also have the following facts about flat connections and instantons over $\Bbb R \times Y$ for $Y = \pm O, \pm \PHS$.
\begin{itemize}
\item[(i)] The manifold $\PHS$ supports three flat connections: the trivial connection $\theta$ and two irreducible flat connections $\alpha_1, \alpha_2$ with $\CS$-values $\frac{1}{120}, \frac{49}{120}$ respectively. The moduli space $\breve M_{1/120}(\PHS; \alpha_1, \theta)$ is a singleton. 
\item[(ii)] Every other nonempty moduli space $\breve M_\kappa(\PHS; \alpha, \beta)$ has $\kappa \ge 2/5$. 
\item[(iii)] Every nonempty moduli space $\breve M_\kappa({-}\PHS; \alpha, \theta)$ has $\kappa \ge \frac{71}{120}$.
\item[(iv)] Every nonempty moduli space $\breve M_\kappa(\pm O; \alpha, \beta)$ has $\kappa \ge \frac{1}{48}$.
\end{itemize}
\end{lemma}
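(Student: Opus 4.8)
The statement is Lemma~\ref{PO-basic-prop}, which packages together the regularity of spherical 3-manifolds together with a collection of facts about Chern–Simons values and minimal instanton energies on $\pm\PHS$ and $\pm O$. My approach would be to handle the regularity claim first and then treat the four enumerated items by direct computation of the flat moduli and their Chern–Simons spectrum, since for these manifolds the fundamental group is finite and the representation varieties are explicit finite sets.

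First I would address regularity. For a spherical space form $Y = S^3/\Gamma$, flat $SU(2)$-connections correspond to conjugacy classes of representations $\Gamma \to SU(2)$, which form a finite set; I would invoke the standard fact (see e.g. work of Fintushel–Stern and Austin on Seifert-fibered spaces, or the general principle that for the round metric on a space form the relevant Laplacians have no small eigenvalues on the relevant bundles) that with respect to the spherical metric, $H^1(Y;\mathrm{ad}\,\alpha) = 0$ for every flat $\alpha$, so the flat connections are cut out transversely, and moreover the moduli spaces $\breve M_\kappa(Y;\alpha,\beta)$ of instanton trajectories are cut out transversely because the round metric is "generic enough" — this is exactly the content of the Fintushel–Stern computation showing that instanton moduli over $\mathbb{R}\times Y$ for $Y$ spherical are regular and the relative gradings are computed by $\rho$-invariants. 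So $Y$ is regular.

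For items (i)–(iii) I would use the explicit structure for the Poincaré sphere $\PHS = \Sigma(2,3,5)$. The representation variety $\pi_1(\PHS) \to SU(2)$ has exactly three points: the trivial one and two irreducibles $\alpha_1,\alpha_2$, with Chern–Simons values $\tfrac{1}{120}$ and $\tfrac{49}{120}$ — these are the classical values computed by Fintushel–Stern / Kirk–Klassen, coming from the formula $\CS = -\tfrac{e^2}{4\cdot 2\cdot 3\cdot 5} \bmod 1$ for the relevant rotation numbers $e$. The instanton Floer complex of $\PHS$ is well known (it has two generators, in gradings $1$ and $5$), and there is exactly one gradient trajectory from $\alpha_1$ (the index-$1$ generator) down to $\theta$ of minimal energy, giving the singleton moduli space $\breve M_{1/120}(\PHS;\alpha_1,\theta)$; the energies of all trajectories between flat connections on $\PHS$ lie in $\tfrac{1}{120}\Z$ and are differences of the three CS values, so the only one strictly between $0$ and $2/5$ is $\tfrac{1}{120}$ itself (the difference $\tfrac{49}{120} - \tfrac{1}{120} = \tfrac{48}{120} = \tfrac{2}{5}$, and $\tfrac{49}{120}$ itself as a trajectory to $\theta$, are the next smallest), giving (ii). For (iii), on $-\PHS$ the trajectory energies are the negatives of CS differences mod $\Z$, so reduce mod $\Z$: the relevant values from $\alpha_i$ to $\theta$ are $-\tfrac{1}{120} \equiv \tfrac{119}{120}$ and $-\tfrac{49}{120}\equiv\tfrac{71}{120}$, plus possibly larger; the minimum over nonempty moduli spaces is $\tfrac{71}{120}$, which I would confirm by checking which of $\alpha_1,\alpha_2$ actually supports a trajectory to $\theta$ on the reversed manifold using the Floer grading.

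For item (iv) I would compute the flat connections on $O$, the octahedral space with $|\pi_1(O)| = 48$ and $H_1 = (\Z/2)^2$. By the covering-space argument already used in the proof of Lemma~\ref{lemma:top-energy} (the universal cover is $S^3$, CS is multiplicative under covers, so $48\,\CS(\alpha)\equiv 0$), every flat connection on $O$ has $\CS(\alpha)\in\tfrac{1}{48}\Z$; hence any instanton trajectory on $\mathbb{R}\times(\pm O)$ has energy $\equiv \CS(\alpha)-\CS(\beta)\in\tfrac{1}{48}\Z$ mod $\Z$, and since it is positive it is at least $\tfrac{1}{48}$. This last point is in fact formal given the finite fundamental group and needs no detailed representation-variety computation — it only uses $|\pi_1(O)|=48$.

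\textbf{Main obstacle.} The genuinely delicate point is the regularity claim (and, relatedly, pinning down that the minimal trajectory moduli spaces on $\PHS$ are exactly singletons with the claimed energies and that nothing with smaller energy sneaks in on $-\PHS$). Transversality for the round metric is not automatic and relies on the special-function computations of Fintushel–Stern–type for Seifert-fibered spaces; I would either cite those computations directly or note that one may instead perturb the metric/equation slightly without disturbing the finite, rigid set of flat connections (which are isolated and unobstructed) nor the low-energy part of the trajectory spaces, and appeal to the standard genericity results quoted earlier in the paper. The Chern–Simons value bookkeeping for $\pm\PHS$ is then a finite check against the known instanton Floer data.
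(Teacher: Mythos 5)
Your handling of items (ii)--(iv) matches the paper's: a nonempty $\breve M_\kappa$ forces $\kappa>0$ with $\kappa\equiv \CS(\alpha)-\CS(\beta)\pmod\Z$, the Fintushel--Stern formula gives the values $0,\tfrac{1}{120},\tfrac{49}{120}$ on $\PHS$ (with $\CS_{-Y}=-\CS_Y$ for (iii)), and the $\tfrac{1}{48}\Z$ constraint on $O$ comes from multiplicativity of $\CS$ under the 48-fold cover by $S^3$. That bookkeeping is correct and is exactly what the paper does. For regularity, the paper does not argue via eigenvalue estimates or metric perturbation; it cites Austin's argument lifting a finite-energy ASD connection on $\R\times(S^3/\Gamma)$ to $S^4$ and identifying the cokernel of the ASD operator with the $\Gamma$-invariant part of the (vanishing) cokernel on $S^4$. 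Your fallback of perturbing the metric would be in tension with the rest of your argument, since the singleton count in (i) is quoted from computations that hold for the round metric specifically.

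The genuine gap is your justification of the singleton claim in (i). Knowing that the instanton Floer complex of $\PHS$ has two generators in gradings $1$ and $5$ tells you at most that $\breve M_{1/120}(\PHS;\alpha_1,\theta)$ is zero-dimensional; it says nothing about how many points it contains. Moreover, the Floer differential does not count trajectories to the trivial connection $\theta$ at all ($\theta$ is not a generator of the complex), so "there is exactly one gradient trajectory from $\alpha_1$ down to $\theta$" is not part of the "well known Floer data" and cannot be read off from it. This count is the key computational input of the whole paper, and the paper has to work for it: it invokes the explicit identification (Austin, as presented by Gibson) of the framed moduli space $\overline{\mathcal M}_z({-}\PHS;\theta,Q)$ with $SO(3)$, so that the unframed moduli space is a single point, and separately establishes that this instanton has energy $n_{\theta Q}/|\pi_1\PHS|=\tfrac{1}{120}$ via the computation $n_{\theta Q}=1$, which in turn identifies $Q$ with $\alpha_1$. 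Without some substitute for that explicit moduli-space computation, your proof of (i) is incomplete.
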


\begin{proof}
Item (i) is the most substantial, and we discuss this last. The claim about smoothness is proved in \cite[Section 4.5]{austin}: for a spherical 3-manifold $Y = S^3/\Gamma$, if $\Bbb R \times Y$ is equipped with the product metric corresponding to the round metric on $Y$ and $A$ is a finite energy ASD connection with respect to this metric, then we may lift $A$ to $\Bbb R \times S^3$ and apply removablity of singularities to obtain an ASD connection $\widetilde A$ on $S^4$ with respect to the round metric. The cokernel of the ASD operator of $A$ is identified with the $\Gamma$-invariant part of the cokernel of the ASD operator of $\widetilde A$, which is zero for any ASD connection on $S^4$. 

As for items (ii)-(iv), a non-empty moduli space has $\kappa > 0$ (an instanton has $\kappa \ge 0$, but we define $\breve M_0(Y;\alpha, \beta)$ to be empty), so we have $\CS(\alpha)-\CS(\beta) \equiv \kappa > 0$. Items (ii) and (iii) follow from (i) by inspection of these Chern--Simons invariants, where for (iii) we use that $\CS_{{-}Y}(\alpha) = -\CS_Y(\alpha)$; item (iv) follows because $S^3$ is a 48-fold cover of $O$, so the minimal positive difference between Chern--Simons invariants is $\frac{1}{48}$.

We will establish item (i) in three steps. First, we enumerate the flat connections of $\PHS$ and their Chern--Simons values. Second, we show that there is a zero-dimensional moduli space containing a single ASD connection $A_0$ that is asymptotic to the trivial connection on the outgoing end. Finally, we establish that $A_0$ has energy $\frac{1}{120}$ and hence it is asymptotic to $\alpha_1$ on the incoming end. 

The first part will follow from the work of Fintushel--Stern \cite{FS:HFSF}. Their conventions are different from ours in several ways. First, they study the self-duality equations $F_A = *F_A$ over $Y \times \Bbb R = \Bbb R \times (-Y)$, whereas we study the anti-self-duality equations $F_A = -*F_A$ over $\Bbb R \times Y$. There is a canonical bijection between these two sets of connections. Next, Fintushel and Stern work with $SO(3)$-connections, whereas we work with $SU(2)$-connections, and their version of topological energy (called `Pontryagin charge' in \cite{FS:HFSF}\footnote{The expression for Pontryagin charge in \cite{FS:HFSF} is written with the incorrect sign; the authors intend the constant factor outside their integral to be $\frac{-1}{8\pi^2}$, as this would be the factor needed to make the first displayed equation in the proof of \cite[Theorem 3.9]{FS:HFSF} correct.}) is related to our version of topological energy by a constant factor of $-4$, related to the fact that if $g \in SU(2)$ and $\text{ad}_g \in SO(3)$, then $\text{tr}(\text{ad}_g^2) = 4\text{tr}(g^2)$. 

It is shown in \cite[Proposition 2.8]{FS:HFSF} that there is a bijection between the set of irreducible flat connections on $\Sigma(p,q,r)$ and the set of triples $(k,\ell,m)$ of integers $0 < k < p, \;\; 0 < \ell < q, \;\; 0 < m < r$ satisfying certain further conditions. Write $\alpha_{k,\ell,m}$ for the corresponding flat connection. In the special case $(p,q,r) = (2,3,5)$, there are two such triples: $(1,1,1)$ and $(1,1,3)$. We write the associated irreducible flat connections as $\alpha_1$ and $\alpha_2$, respectively. For a general irreducible flat connection $\alpha_{k,\ell,m}$ on $\Sigma(p,q,r)$, the computation of \cite[Theorem 3.7]{FS:HFSF} establishes
\[
  \CS(\alpha_{k,\ell,m}) = \frac{(kqr+\ell pr + mpq)^2}{4pqr} \mod \Z,
\]
by constructing a connection $A$ over $[0,1] \times \Sigma$ which is trivial at $0$ and equal to $\alpha_{k,\ell,m}$ at $1$, and establishing that for this connection $p_1(A) = \frac{(kqr+\ell pr + mpq)^2}{pqr}.$ The formula above follows from the relation $\kappa(A) = -\frac 14 p_1(A).$

The remaining parts follow from the work of \cite[Section 4.3]{austin}, but we refer to the more recent discussion of \cite[Section 4]{G:equi-poly-spaces}. There the author discusses \emph{framed moduli spaces} on $\Bbb R \times (-\PHS)$, denoted $\overline{\mathcal M}_z(-\PHS; \alpha, \beta)$, which carry a free action of $SO(3)$, a pair of equivariant maps to $SO(3)/\Gamma_\alpha$ and $SO(3)/\Gamma_\beta$ (where $\Gamma_\alpha$ is the set of $\alpha$-parallel gauge transformations). Here $z$ is the homotopy class of a path between $\alpha$ and $\beta$ in the configuration space of all connections on $\PHS$. The quotient of $\overline{\mathcal M}_z(-\PHS; \alpha, \beta)$ by the $SO(3)$ action is a compactification of our $\breve M_{\kappa(z)}(-\PHS; \alpha, \beta)$ for a choice of ${\kappa(z)}$ that is uniquely determined by $z$. When $\breve M_{\kappa(z)}(-\PHS; \alpha, \beta)$ is zero-dimensional, we simply have $\overline{\mathcal M}_z(-\PHS; \alpha, \beta)/SO(3) = \breve M_{\kappa(z)}(-\PHS; \alpha, \beta)$. 

First, \cite[Theorem 4.1]{G:equi-poly-spaces} asserts that when $\theta$ is `adjacent to $\beta$' in a certain graph $\mathcal S_{I^*}$, there is a choice of $z$ such that the moduli space $\overline{\mathcal M}_z(-\PHS; \theta, \beta)$ is diffeomorphic to $SU(2)/\Gamma'$ for some discrete subgroup $\Gamma' \subset SU(2)$. Because this space carries a free $SO(3)$-action, we must have $\overline{\mathcal M}_z(-\PHS; \theta, \beta) = SO(3)$ and thus $\Gamma' = \Z/2$. It follows immediately from the definition in \cite[Section 3.3]{G:equi-poly-spaces} that for $Q$ the `canonical representation' $\pi_1(\PHS) \to SU(2)$, we have $\theta$ adjacent to $Q$ in the graph $\mathcal S_{I^*}$. Thus for this irreducible representation $Q$, there is a zero-dimensional moduli space $\breve M_{\kappa(z)}(\PHS; Q, \theta)$ containing a single element $A_0$. 

We show that $\kappa(z)=\kappa(A_0)=\frac{1}{120}$, which in particular implies that $Q=\alpha_1$ using the computation of the Chern--Simons invariants of $P$. It is established in \cite[Equation (4.2)]{G:equi-poly-spaces} that the energy of this instanton coincides with $c_2(\widetilde E)/|\pi_1 \PHS|$, where $\widetilde E$ is the associated bundle over $S^4$. In \cite[Theorem 4.2]{G:equi-poly-spaces}, this Chern class is named $k$, and in \cite[Lemma 4.4]{G:equi-poly-spaces} a certain vector space $\mathscr H$ is introduced whose dimension is $k$. In \cite[Definition 4.20]{G:equi-poly-spaces} a constant $n_{\theta Q}$ is introduced which here is equal to $\dim \mathscr H$ (the group $\Gamma'$ was computed above to be $\Z/2$). Finally, in \cite[Lemma 4.23]{G:equi-poly-spaces} it is established that $n_{\theta Q} = 1$. Thus the corresponding instanton on $\Bbb R \times \PHS$ has energy $n_{\theta Q}/|\pi_1 \PHS| = \frac{1}{120}$.
\end{proof}

\begin{proof}[Proof of Proposition \ref{prop:CS-bound}]
As discussed earlier, if $X$ is positive-definite with boundary $m\PHS \# {-}kO$, we may construct a cobordism $W: \PHS \to \sqcup_{m-1} {-}\PHS \sqcup_{|k|} {\pm}O$ with $b_1(W) = b^+(W) = 0$. 
 
For a perturbation $\pi$ as in Lemma \ref{flat-W-pert}, we consider the moduli space $M_{1/120}^\pi(W; \alpha_1, \theta')$. Because $\alpha_1$ is irreducible, all elements of this moduli space are irreducible. Further, because one may obtain a connection $W: \alpha_1 \to \theta'$ of energy $\frac{1}{120}$ by gluing the instanton $A$ of Lemma \ref{PO-basic-prop}(i) to the trivial connection $\Theta$ on $W$ along the trivial connection $\theta$ on $\PHS$, we have by (\ref{eq:index}) that this moduli space has dimension 
$$\ind(D_B) + 3 + \ind(D_\Theta) = 1 + 3 - 3 = 1.$$ 
Here $\ind(D_B) = 1$ as $\ind(D_B)$ computes the dimension of the moduli space $M(\PHS; \alpha_1, \theta)$ before the quotient by the $\Bbb R$-action, and after quotienting this space is $0$-dimensional. So $M_{1/120}^\pi(W; \alpha_1, \theta')$ is a smooth oriented 1-manifold, and we may apply Lemma \ref{lemma:end-counting} to determine the ends of this moduli space. 

There are no ends of the form given in Lemma \ref{lemma:end-counting}(i), as such an instanton would factor through an instanton on $\Bbb R \times \PHS$ with $\kappa_1 \le \frac{1}{120}$ and irreducible flat limits $(\alpha, \beta)$, but no such instantons exist by Lemma \ref{PO-basic-prop}(ii).

There are also no ends of the form given in Lemma \ref{lemma:end-counting}(ii), as such an instanton would factor through an instanton on one of $\Bbb R \times {-}\PHS$ or $\Bbb R \times \pm O$ with $\kappa_2 \le \frac{1}{120}$, but no such instantons exist by Lemma \ref{PO-basic-prop}(iii)-(iv). 

Therefore all ends arise from Lemma \ref{lemma:end-counting}(iii), gluing the single instanton in $\breve M_{1/120}(\PHS; \alpha_1, \theta)$ to the reducibles on $W$ which are trivial on the ends; this contributes $|H_1(W)/i_* H_1(\partial W)|$ ends, all with the same sign. This contradicts the fact that an oriented 1-manifold with finitely many ends has zero ends, counted with sign.
\end{proof}

\bibliographystyle{alpha}
\bibliography{references}

\end{document}